\newcommand{\nn}{\nonumber}
\newcommand{\eps}{\varepsilon}
\newcommand{\R}{\mathbb{R}}
\newcommand{\Rn}{\R^n}
\newcommand{\ny}{\nu}
\newcommand{\phii}{\varphi}
\newcommand{\al}{\alpha}
\newcommand{\col}{\colon}
\newcommand{\Om}{\Omega}
\newcommand{\na}{\nabla}
\newcommand{\Laplace}{\Delta}
\newcommand{\Lap}{\Laplace}
\newcommand{\calF}{\mathcal{F}}
\newcommand{\rand}{\del\Omega}
\newcommand{\Ombar}{\overline{\Om}}
\newcommand{\dom}{\del \Om}
\newcommand{\amrand}{|_{\rand}}
\newcommand{\bdry}{\amrand}
\DeclareMathOperator{\diam}{diam}
\newcommand{\del}{\partial}
\newcommand{\delny}{\partial_\ny}
\newcommand{\intom}{\int_\Om}
\newcommand{\io}{\intom}
\newcommand{\intdom}{\int_{\dom}}
\newcommand{\intnt}{\int_0^t}
\newcommand{\intntau}{\int_0^\tau}
\newcommand{\Tmax}{T_{max}}
\newcommand{\Liom}{L^\infty(\Om)}
\newcommand{\Lqom}{L^q(\Om)}
\newcommand{\Lpom}{L^p(\Om)}
\newcommand{\Lzom}{L^2(\Om)}
\newcommand{\Leom}{L^1(\Om)}
\newcommand{\Lom}[1]{L^{#1}(\Om)}
\newcommand{\Weqom}{W^{1,q}(\Om)}
\newcommand{\ddt}{\frac{\rm d}{{\rm d}t}}
\newcommand{\norm}[2][]{\left\|#2\right\|_{#1}}
\newcommand{\upto}{\nearrow}
\newcommand{\set}[1]{\{#1\}}
\newcommand{\bigset}[1]{\Big\{#1\Big\}}
\newcommand{\sub}{\subset}
\newtheorem{theorem}{Theorem}
\numberwithin{theorem}{section}
\newtheorem{lemma}[theorem]{Lemma}
\newtheorem{remark}[theorem]{Remark}
\newcommand{\Fab}{\calF_{a,b}}
\title{A new approach toward boundedness in a two-dimensional parabolic chemotaxis system with singular sensitivity}
\author{Johannes Lankeit\thanks{Institut f\"ur Mathematik, Universit\"at Paderborn, Warburger Str. 100, 33098 Paderborn, Germany; email: \mbox{johannes.lankeit@math.upb.de}}}
\begin{document}
 \maketitle
 \begin{abstract}
  We consider the parabolic chemotaxis model
\[
 \begin{cases}
  u_t=\Lap u - \chi \na\cdot(\frac uv \na v)\\
  v_t=\Lap v - v + u
 \end{cases}
\]
 in a smooth, bounded, convex two-dimensional domain and show global existence and boundedness of solutions for $\chi\in(0,\chi_0)$ for some $\chi_0>1$, thereby proving that the value $\chi=1$ is not critical in this regard.\\
Our main tool is consideration of the energy functional 
\[ \Fab(u,v)=\io u\ln u - a \io u\ln v + b \io |\na \sqrt{v}|^2 \]
for $a>0$, $b\geq 0$, where using nonzero values of $b$ appears to be new in this context.\\
{\bf Keywords:} chemotaxis; singular sensitivity; global existence; boundedness\\
{\bf MSC:} 35K55 (primary), 35A01, 35A09, 92C17, 35A07, 35B40 (secondary)
 \end{abstract}

\section{Introduction}

Numerous phenomena in connection with spontaneous aggregation can be described by PDE models incorporating a cross-diffusion mechanism. A prototypical example, which lies at the core of models used for a variety of purposes and to so different aims as the description pattern formation of bacteria or slime mold in biology \cite{KS} or the prediction of burglary in criminology \cite{MPS}, is the following variant of the Keller-Segel system of chemotaxis:
\begin{align}
 \label{eq:CT}
 u_t=&\Lap u- \na\cdot (u\, S(v)\, \na v)\nn\\
 v_t=&\Lap v-v+u\\
 \delny u\bdry=&\delny v\bdry=0\nn\\
 u(\cdot,0)=u_0, &\; v(\cdot,0)=v_0\nn
\end{align}
in a bounded domain $\Om\sub \R^n$ with smooth boundary, with given nonnegative initial data $u_0,v_0$. 
We shall be concerned with the case of the singular sensitivity function $S$ given by 
\begin{equation}
 \label{eq:S}
 S(v)=\frac\chi v
\end{equation}
for a constant $\chi>0$, which is in compliance with the Weber-Fechner law of stimulus perception (see \cite{KStrav}).

One of the first questions of mathematical interest with respect to this model is that of existence of a global classical solution, as opposed to blow-up of solutions in finite time.
For the vast mathematical literature on chemotaxis, a large part of which is concerned with this question, see one of the survey articles \cite{Horstmann03,Horstmann04,HillenPainter,BBTW} and references therein.

According to the standard reasoning in the realm of chemotaxis equations (as e.g. formulated in \cite{BBTW}), in order to obtain global existence of classical solutions, for the two-dimensional case considered here, it is sufficient to derive $t$-independent bounds on the quantities $\intom u(t)\ln u(t)$ and $\int |\na v(t)|^2$.

To achieve this in the particular context of \eqref{eq:CT}, it has proven useful to consider the expression 
\begin{equation}\label{eq:fctshort}
 \intom u\ln u - a \intom u\ln v,
\end{equation}
as it has been done by Nagai, Senba, Yoshida \cite{NSY1997} or Biler \cite{Biler99}. In these works, global existence of solutions has been derived for $\chi\leq1$. 

In the present article we shall answer the question whether $\chi=1$ is a critical value in this regard in the negative. 
This question had been left open in \cite{Wk}, where the above-mentioned results have been generalized to higher dimension $n$, then obtaining existence in the case $\chi<\sqrt{2/n}$. 

Let us mention some more results concerning equation \eqref{eq:CT}: 
That the classical solutions for $\chi<\sqrt{2/n}$ are global-in-time bounded has been shown in \cite{Fujie}.
In \cite{Wk} also weak solutions have been shown to exist for \eqref{eq:CT}, as long as $\chi<\sqrt{\frac{n+2}{3n-4}}$.
In the radially symmetric setting, moreover, certain global weak ``power-$\lambda$-solutions'' exist (\cite{WkCSt}).\\ 
Related parabolic-elliptic chemotaxis models are investigated, e.g. in \cite{fujie_winkler_yokota_logsource}, where the presence of terms describing logistic growth is used to ensure global existence and boundedness of classical solutions. In \cite{fujie_winkler_yokota} global existence and boundedness of classical solutions to the parabolic-elliptic counterpart of \eqref{eq:CT} are obtained for even more singular sensitivities of the form $0<S(v)\leq\frac\chi{v^k}$, $k\geq 1$, under a smallness condition on $\chi$, which for $k=1$ and $n=2$  amounts to $\chi<1$.\\
Also concerning classical solutions of the fully parabolic system \eqref{eq:CT}, to the best of our knowledge, the assertions for $\chi\leq1$ are the best known so far.

Since the new possible values for $\chi$ are but slightly larger than $1$, rather than these values it is the method that can be considered the new contribution of the present article: Key to our approach toward the expansion of the interval of values for $\chi$ 
known to yield global solutions, namely,  
shall be the employment of an additional summand
\[
 b\int|\na \sqrt{v}|^2
\]
in \eqref{eq:fctshort}. Functionals containing this term have successfully been used in the context of coupled chemotaxis-fluid systems (see \cite{wkfluid})
or of chemotaxis models with consumption of the chemoattractant \cite{TaoWk}
(e.g. obtained from the aforementioned system upon neglection of the fluid).

In the end we will arrive at the following
\begin{theorem}
 \label{thm:main}
 Let $\Om\sub\R^2$ be a convex bounded domain with smooth boundary. Let $0\leq u_0\in C^0(\Ombar)$, $u_0\not\equiv 0$,  
 $0<v_0\in \bigcup_{q>2} W^{1,q}(\Om)$.
 Then there exists $\chi_0>1$ such that for any $\chi\in(0,\chi_0)$ the system \eqref{eq:CT} has a global classical solution, which is bounded. 
\end{theorem}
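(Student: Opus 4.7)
The plan is to derive a $t$-independent bound on $\Fab(u(t),v(t))$ for suitably chosen $a>0$, $b\geq 0$, valid for every $\chi\in(0,\chi_0)$ with some $\chi_0>1$, and from it to extract the bounds on $\intom u\ln u$ and $\intom|\nav|^2$ whose existence, by the standard two-dimensional extensibility criterion for \eqref{eq:CT}, is sufficient for global existence. I would first invoke the by now classical local theory to obtain a solution on a maximal interval $[0,\Tmax)$ with $u\geq 0$ and $v>0$; mass conservation $\intom u(\cdot,t)=\intom u_0$ together with a time-local pointwise lower bound $v(\cdot,t)\geq c(T)>0$ on $\Ombar\times[0,T]$ for every $T<\Tmax$ (from the parabolic maximum principle applied to the $v$-equation) give meaning to $\ln v$ and thus to $\Fab$.

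The main computational step is $\ddt\Fab$. Integration by parts gives $\ddt\intom u\ln u=-\intom|\nau|^2/u+\chi\intom\nau\cdot\nav/v$, and differentiating $-a\intom u\ln v$, inserting both equations of \eqref{eq:CT} and using the identity $u\Lap v/v=\na\cdot(u\nav/v)-\nau\cdot\nav/v+u|\nav|^2/v^2$ yields the terms $2a\intom\nau\cdot\nav/v$, $-a(\chi+1)\intom u|\nav|^2/v^2$, $a\intom u$ and $-a\intom u^2/v$. For the third summand I would exploit $\intom|\nabla\sqrt v|^2=\tfrac14\intom|\nav|^2/v$, plug in $v_t=\Lap v-v+u$, and use a Bochner-type identity; convexity of $\Om$ enters here through the discarding of the boundary contribution $\int_\dom\delny|\nav|^2\leq 0$. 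The outcome is the appearance of genuinely new coercive quantities such as $b\intom|\nav|^4/v^3$ or $b\intom|D^2v|^2/v$, alongside further bulk terms in $u$, $\Lap v$ and $|\nav|^2$.

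The heart of the matter is then a pointwise balancing argument. Summing all contributions, the inequality takes the schematic form
\[
 \ddt\Fab+\intom\frac{|\nau|^2}{u}+\alpha\intom\frac{u|\nav|^2}{v^2}+\gamma\intom\frac{|\nav|^4}{v^3}\leq\beta\intom\frac{\nau\cdot\nav}{v}+\text{l.o.t.},
\]
with $\alpha,\beta$ affine in $(a,b,\chi)$ and $\gamma$ linear in $b$. Young's inequality applied to the cross term,
\[
 \beta\intom\frac{\nau\cdot\nav}{v}\leq\intom\frac{|\nau|^2}{u}+\frac{\beta^2}{4}\intom\frac{u|\nav|^2}{v^2},
\]
renders the estimate closable as soon as $\tfrac{\beta^2}{4}\leq\alpha$ (if need be after trading part of the $\gamma$-dissipation against the $\alpha$-term by another Young step), while the remaining lower-order quantities are absorbed into $\Fab$ itself through a two-dimensional Gagliardo-Nirenberg inequality. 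At $\chi=1$, $b=0$, $a=\tfrac12$ this algebraic constraint is satisfied with equality and one recovers the classical argument of \cite{NSY1997,Biler99}; the whole point of allowing $b>0$ is that the additional $|\nav|^4/v^3$-dissipation strictly relaxes the constraint, so that by continuity, for every $\chi$ slightly above $1$ an admissible pair $(a,b)$ with $b>0$ can be exhibited. I expect verifying this algebraic trade-off to be the main obstacle.

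Once $\sup_{t\in[0,\Tmax)}\Fab(u(\cdot,t),v(\cdot,t))<\infty$ has been secured, I would extract $\sup_t\intom u\ln u<\infty$ via Young's entropy inequality $u\ln v\leq u\ln u-u+v$, a uniform-in-$t$ bound on $\intom v$ (obtained by integrating the $v$-equation and using mass conservation of $u$), and the nonnegativity of $b\intom|\nabla\sqrt v|^2$; this works as long as the relevant $a$ can be chosen strictly less than $1$, which the trade-off above indeed permits for $\chi$ sufficiently close to $1$. A heat-semigroup argument for $v_t=\Lap v-v+u$ then lifts this $L\log L$-bound on $u$ to $\sup_t\intom|\nav|^2<\infty$, the extensibility criterion yields $\Tmax=\infty$, and a Moser-type bootstrap in the spirit of \cite{Fujie} finally delivers the claimed $L^\infty$-boundedness.
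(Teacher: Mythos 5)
Your strategy is essentially the paper's: the same functional $\Fab$, the same use of convexity to discard the boundary term in the Bochner-type computation for $\io|\na\sqrt v|^2$, the lower bound $\io v|D^2\ln v|^2\geq c_0\io\frac{|\na v|^4}{v^3}$ as the source of the new dissipation, and the perturbation around $(a,b,\chi)=(\tfrac12,0,1)$, where your Young constraint $\beta^2/4\leq\alpha$ holds with equality. The "algebraic trade-off" you flag as the main obstacle but do not carry out is in fact only a one-line check: with $\phii$ as in \eqref{eq:defphii} one has $\phii(\tfrac12,0;1)=0$ and $\frac{d}{db}\phii(\tfrac12,b;1)\big\rvert_{b=0}=-\tfrac{c_0}2<0$; the point is that the deficit $\beta^2/4-\alpha$ vanishes at $b=0$, so after the second Young step its square contributes only $O(b^2)$ against the gained dissipation $\tfrac{bc_0}2$ (Lemma \ref{lem46}). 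Your alternative extraction of the $\io u\ln u$ bound from $\Fab$ via $u\ln v\leq u\ln u-u+v$ and boundedness of $\io v$ is fine for $a<1$ (the paper's Jensen argument, Lemma \ref{lem:Fabgeq}, works for every $a\geq0$), and it also furnishes the lower bound on $\Fab$ needed to exploit the differential inequality.

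The genuine gap is your last step: the claim that a heat-semigroup argument lifts the uniform $L\log L$ bound on $u$ to $\sup_t\io|\na v|^2<\infty$. In two dimensions the relevant smoothing estimate reads $\norm[\Lzom]{\na e^{\sigma(\Lap-1)}w}\leq C\left(1+\sigma^{-1}\right)e^{-\lambda\sigma}\norm[\Leom]{w}$, whose singularity $\sigma^{-1}$ is not integrable in the Duhamel formula, and $L\log L$ does not embed into any $L^p$ with $p>1$, so no off-the-shelf $L^p$--$L^2$ estimate closes this integral; a purely semigroup-based passage from $L\log L$ to $W^{1,2}$ is not something you can simply cite, and this is precisely why the paper takes a different route. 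Moreover, your Young step spends the entire dissipation $\io\frac{|\na u|^2}{u}$ (coefficient $1$), which removes the natural fallback: the paper deliberately retains a fraction $\delta\io\frac{|\na u|^2}{u}$ in Lemma \ref{lem42}, so that integrating the differential inequality of Lemma \ref{lem44} yields $\int_t^{t+\tau}\io\frac{|\na u|^2}{u}\leq C$; Gagliardo--Nirenberg converts this into $\int_t^{t+\tau}\io u^2\leq C$, and testing the second equation with $-\Lap v$ together with an ODE comparison (Lemma \ref{lem:intandintgivesGE}) gives the uniform bound on $\io|\na v|^2$. You should adopt this mechanism (or supply a genuinely new argument for the $W^{1,2}$ bound). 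Finally, for the boundedness assertion you need the uniform-in-time positivity of $v$ as in Lemma \ref{lem:vunifbdbelow} (so that the sensitivity $\chi/v$ stays bounded), not merely the time-local lower bound $c(T)$ you invoke.
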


The plan of the paper is as follows: In the next section we will discuss local existence of and an extensibility criterion for solutions to \eqref{eq:CT}. 
Section \ref{sec:estimates} provides identities and estimates that will facilitate the usage of the additional term at the center of the proof of Theorem \ref{thm:main}, to which Section \ref{sec:proofmainthm} will be devoted.

\section{How to ensure global existence}

A general existence theorem for chemotaxis models is the following, taken from \cite{BBTW}:
\begin{theorem}\label{thm:exBBTW}
 Let $n\geq 1$ and $\Om\sub\Rn$ be a bounded domain with smooth boundary and let $q>n$. For some $\omega\in(0,1)$ let $S\in C^{1+\omega}_{loc}(\Ombar\times[0,\infty)\times\R^2)$, $f\in C^{1-}(\Ombar\times[0,\infty)\times\R^2)$ and $g\in C^{1-}_{loc}(\Ombar\times[0,\infty)\times\R^2)$, and assume that $f(x,t,0,v)\geq 0$ for all $(x,t,v)\in\Ombar\times[0,\infty)^2$ and that $g(x,t,u,0)\geq 0$ for any $(x,t,u)\in\Ombar\times[0,\infty)^2$.
 Then for all nonnegative $u_0\in C^0(\Ombar)$ and $v_0\in W^{1,q}(\Om)$ there exist $\Tmax\in(0,\infty]$ and a uniquely determined pair of nonnegative functions
\begin{align}\label{eq:solnspaces}
 u\in& C^0(\Ombar\times[0,\Tmax))\cap C^{2,1}(\Ombar\times(0,\Tmax)),\\
 v\in & C^0(\Ombar\times[0,\Tmax))\cap C^{2,1}(\Ombar\times(0,\Tmax))\cap L_{loc}^\infty([0,\Tmax); \Weqom),\nn
\end{align}
such that $(u,v)$ solves 
\begin{align}\label{eq:CTallg}
 u_t=&\Lap u-\na\cdot\left(uS(x,t,u,v)\na v\right) + f(x,t,u,v),\\
v_t=&\Lap v-v+g(x,t,u,v),\nn\\
0=&\delny u\bdry=\delny v\bdry,\nn\\
&u(\cdot,0)=u_0, v(\cdot,0)=v_0\nn
\end{align}
classically in $\Om\times(0,\Tmax)$ and such that 
\begin{equation}\label{eq:atTmax}
 \mbox{if } \Tmax<\infty, \mbox{ then } \norm[\Liom]{u(\cdot,t)}+\norm[\Weqom]{v(\cdot,t)}\to \infty \;\mbox{as }t\upto\Tmax.
\end{equation}
\end{theorem}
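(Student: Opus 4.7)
My plan is to prove this by a contraction mapping argument on short time intervals, followed by extension to a maximal interval of existence.

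First, I would fix $q > n$ (so that $\Weqom \embeddedinto C^0(\Ombar)$) and some $R > \norm[\Liom]{u_0} + \norm[\Weqom]{v_0}$, and consider for small $T > 0$ the closed, convex set
\[
X_T = \setl{(u,v) \in C([0,T]; \Combar) \times C([0,T]; \Weqom) : (u,v)(0) = (u_0,v_0),\; \sup_{[0,T]}\norm[\Liom]{u} + \sup_{[0,T]}\norm[\Weqom]{v} \leq R}.
\]
Given $(\utilde,\vtilde) \in X_T$, define $(u,v) = \Phi(\utilde,\vtilde)$ by solving the two decoupled linear parabolic problems
\[
u_t = \Lap u - \na\cdot(\utilde S(x,t,\utilde,\vtilde)\na\vtilde) + f(x,t,\utilde,\vtilde), \qquad v_t = \Lap v - v + g(x,t,\utilde,\vtilde),
\]
under Neumann boundary conditions and the prescribed initial data. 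Using the Neumann heat semigroup's variation-of-constants representation together with the standard $L^p$--$L^q$ smoothing estimates for $e^{t\Lap}$ and $\na e^{t\Lap}$, one verifies that $\Phi$ maps $X_T$ into itself and is a strict contraction in the natural norm of $X_T$ provided $T$ is chosen small enough in terms of $R$ and the (local) Lipschitz constants of $S$, $f$, $g$ on the relevant compact set. Banach's fixed-point theorem yields a unique mild solution.

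Second, I would upgrade the mild solution to the classical regularity stated in \eqref{eq:solnspaces}. The Hölder regularity of $S$ makes the coefficient $\utilde S(x,t,\utilde,\vtilde)\na\vtilde$ belong, after the fixed-point is reached, to an appropriate Hölder class in space-time (away from $t=0$), so iterated application of interior/boundary Schauder estimates and parabolic bootstrapping gives $u,v \in C^{2,1}(\Ombar\times(0,\Tmax))$. Nonnegativity of $u$ and $v$ follows from the structural assumptions $f(x,t,0,v) \geq 0$ and $g(x,t,u,0) \geq 0$: applying the parabolic comparison principle to the linear equation satisfied by $v$ (with nonnegative source $g(x,t,u,0)$ at its zero-level set) yields $v \geq 0$, and then the equation for $u$ with coefficient field $S(x,t,u,v)\na v$ and nonnegative source at $u = 0$ gives $u \geq 0$ similarly.

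Third, uniqueness on the common existence interval is standard: the difference of two solutions satisfies a linear system whose coefficients are bounded on $\Ombar\times[0,T']$ for any $T' < \Tmax$, and a Gronwall argument forces the difference to vanish. Define $\Tmax$ as the supremum of all $T$ for which the above construction yields a solution satisfying \eqref{eq:solnspaces}.

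The main obstacle is the extensibility criterion \eqref{eq:atTmax}. Here the key observation is that the time $T$ produced by the contraction argument depends on the initial data only through $\norm[\Liom]{u_0} + \norm[\Weqom]{v_0}$ (this is what the $L^p$--$L^q$ estimates for $e^{t\Lap}$ deliver when one controls $\na\vtilde$ in $\Lq$). Hence, if $\Tmax < \infty$ but $\limsup_{t\upto \Tmax}(\norm[\Liom]{u(\cdot,t)} + \norm[\Weqom]{v(\cdot,t)}) < \infty$, one could pick $t_0$ close to $\Tmax$ and restart the fixed-point construction with $(u(\cdot,t_0),v(\cdot,t_0))$ as initial data, obtaining an extension of the solution past $\Tmax$; this contradicts the maximality of $\Tmax$. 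The technical care lies in verifying that the smoothing estimates used in the contraction step depend only on these two norms and not on stronger quantities, which is the point where the choice $q > n$ and the embedding $\Weqom \embeddedinto \Combar$ enter decisively.
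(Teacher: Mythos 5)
Your proposal follows essentially the same route as the paper, which simply invokes the standard argument of \cite{BBTW}: a Banach fixed-point construction via the Neumann heat semigroup on a time interval whose length depends only on $\norm[\Liom]{u_0}$ and $\norm[\Weqom]{v_0}$, parabolic bootstrapping for the regularity in \eqref{eq:solnspaces}, and extension to a maximal $\Tmax$ satisfying \eqref{eq:atTmax} precisely because of that dependence. Your sketch is a correct (and somewhat more detailed) rendering of that argument.
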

\begin{proof}
 A Banach-type fixed point argument provides existence of mild solutions on a short time interval whose length $T$ depends on $\norm[\infty]{u_0}$, $\norm[W^{1,q}]{v_0}$. Standard bootstrapping arguments ensure the regularity properties listed above. It follows from the dependence of $T$ on the norms of $u_0$ and $v_0$ that the solution can be extended to $\Tmax\in(0,\infty]$ satisfying \eqref{eq:atTmax}, 
 see \cite[Lemma 4.1]{BBTW}.
\end{proof}
This theorem is not directly applicable to \eqref{eq:CT}, because it does not cover the case of singular functions $S$. 
We will remove this obstruction via use of the following lemma, which is a generalization of Lemma 2.2 of \cite{Fujie}.

\begin{lemma}\label{lem:vunifbdbelow}
Let the conditions of Theorem \ref{thm:exBBTW} be satisfied and let $\zeta>0$.\\ 
Then there is $\eta=\eta(u_0,v_0,\zeta)>0$ such that if $v_0$ and the solution $(u,v)$ to \eqref{eq:CTallg} satisfy
\[
 \inf v_0>0  \qquad \mbox{ and }\qquad \inf_{s\in[0,\Tmax)} \io g(x,s,u(x,s),v(x,s)) dx\geq \zeta,
\]
the second component of the solution also fulfils 
\[
 v(x,t)\geq \eta \qquad \mbox{for all } (x,t)\in\Ombar\times[0,\Tmax).
\]
\end{lemma}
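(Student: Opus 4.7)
The plan is to use the Duhamel (variation-of-constants) representation for the $v$-equation in \eqref{eq:CTallg} together with pointwise lower bounds for the Neumann heat semigroup. Writing $(e^{t\Lap})_{t\ge 0}$ for the Neumann heat semigroup on $\Om$, the second component of the solution satisfies
\[
 v(\cdot,t) = e^{-t}e^{t\Lap}v_0 + \intnt e^{-(t-s)}e^{(t-s)\Lap} g(\cdot,s,u(\cdot,s),v(\cdot,s))\,ds
\]
pointwise in $\Om$, and since both $v_0\ge \inf v_0>0$ and $g\ge 0$, both summands are nonnegative and can be estimated separately from below.

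For the first summand, the fact that the Neumann heat semigroup preserves the constant function $1$ and acts positivity preservingly immediately yields $e^{t\Lap}v_0\ge \inf v_0$ and hence $e^{-t}e^{t\Lap}v_0\ge e^{-t}\inf v_0$ throughout $\Om\times[0,\Tmax)$. This estimate is uniform on any bounded time interval, but degenerates as $t\to\infty$, so by itself it cannot furnish the desired $\eta$.

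The second summand has to take over for large times, and this is where the hypothesis on $g$ enters. The key ingredient is the standard pointwise lower bound for the Neumann heat kernel: there exists a constant $k>0$ (depending only on $\Om$) such that
\[
 (e^{\tau\Lap}\phii)(x)\ge \frac{k}{|\Om|}\io \phii \qquad \text{for all } x\in\Om,\ \tau\ge 1,
\]
valid for any nonnegative $\phii\in L^1(\Om)$; this is a consequence of the spectral gap between the zero eigenvalue (with constant eigenfunction) and the remaining part of the spectrum of the Neumann Laplacian on a smooth bounded domain. Applying this with $\phii=g(\cdot,s,u,v)$ and using $\io g\ge \zeta$ we obtain, for $t\ge 2$,
\[
 \intnt e^{-(t-s)}e^{(t-s)\Lap}g(\cdot,s,\cdot,\cdot)\,ds
 \;\ge\; \int_0^{t-1} e^{-(t-s)}\cdot\frac{k\zeta}{|\Om|}\,ds
 \;\ge\; \frac{k\zeta}{|\Om|}\bigl(e^{-1}-e^{-2}\bigr).
\]

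Combining the two estimates by a case distinction at $t=2$ and setting
\[
 \eta := \min\left\{\, e^{-2}\inf v_0,\ \frac{k\zeta}{|\Om|}(e^{-1}-e^{-2})\,\right\}
\]
then proves the claim. The only nontrivial ingredient is the kernel lower bound, which I expect to be the main (and only) obstacle; it is standard but has to be invoked carefully, and—as in \cite{Fujie}—it is the reason why one cannot simply compare $v$ with the pure diffusion-degradation solution, whose uniform-in-time lower bound degenerates. Apart from that, the argument is a routine superposition of a short-time bound coming from $v_0$ and a long-time bound coming from the production term $g$.
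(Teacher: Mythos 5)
Your argument is correct in substance and reaches the conclusion by a mildly different route than the paper, so a comparison is worthwhile. Both proofs start from the Duhamel representation and bound the two summands separately, but the ingredients differ. For small times the paper fixes, by continuity of $v$ at $t=0$, some $\tau=\tau(u_0,v_0)$ with $v\geq\tfrac12\inf_\Om v_0$ on $[0,\tau]$; you instead use $e^{t(\Lap-1)}v_0\geq e^{-t}\inf_\Om v_0$ (positivity preservation plus preservation of constants), which is cleaner, gives an explicit constant on $[0,2]$, and makes your $\eta$ depend only on $\inf_\Om v_0$, $\zeta$ and $\Om$ rather than on the solution. For large times the paper uses the pointwise bound $(e^{r\Lap}w)(x)\geq (4\pi r)^{-n/2}e^{-d^2/(4r)}\io w$ with $d=\diam\Om$ (from Fujie/HPW), valid for \emph{all} $r>0$, and simply restricts the Duhamel integral to the recent window $t-s\in(0,\tau)$, so no large-time information on the kernel is needed; you instead invoke $e^{r\Lap}\phii\geq \frac{k}{|\Om|}\io\phii$ uniformly for $r\geq1$ and integrate over $s\in[0,t-1]$. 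That kernel bound is true and standard, but your justification is a little short: the spectral gap together with $L^1$--$L^\infty$ smoothing yields $e^{r\Lap}\phii\geq\left(\frac1{|\Om|}-Ce^{-\lambda_1 r}\right)\io\phii$, which is only useful for $r\geq T_0(\Om)$, and on the compact range $r\in[1,T_0]$ you still need strict positivity of the Neumann kernel --- for instance exactly the Gaussian-type bound the paper quotes; alternatively, state your bound for $r\geq T_0$ only and move the case distinction to $t=T_0+1$, which changes nothing structurally. One caveat you share with the paper: dropping the tail $s\in[t-1,t]$ (respectively applying the kernel lower bound to $g$) uses pointwise nonnegativity of $g(\cdot,s,u,v)$ along the solution, which is not literally among the hypotheses of Theorem \ref{thm:exBBTW} (only $g(x,t,u,0)\geq0$ is assumed there); it does hold in the only application, Theorem \ref{thm:exSingular}, where $g\geq cu\geq0$, so this is not a defect specific to your proof.
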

\begin{proof}
 Let us fix $\tau=\tau(u_0,v_0)>0$ such that 
\[
 \inf_\Om v(\cdot,t)\geq \frac12 \inf_\Om v_0 \qquad\mbox{ for all }t\in[0,\tau].
\]
Employing the pointwise estimate 
\[
 (e^{t\Lap}w)(x)\geq \frac1{(4\pi t)^{\frac n2}} e^{-\frac{d^2}{4t}}\io w\qquad \mbox{for nonnegative } w\in C^0(\Ombar)
\]
for the Neumann heat semigroup $e^{t\Lap}$ with $d=\diam\Om$, as provided in \cite[Lemma 2.2]{Fujie} following 
\cite[Lemma 3.1]{HPW}, we can then conclude 
that 
\begin{align*}
 v(\cdot,t)=&e^{t(\Lap-1)} v_0 + \intnt e^{(t-s)(\Lap-1)} g(\cdot,s,u(\cdot,s),v(\cdot,s))ds\\
\geq&\intnt \frac{1}{(4\pi (t-s))^{\frac n2}} e^{-\frac{d^2}{4(t-s)}-(t-s)} \io g(\cdot,s,u(\cdot,s),v(\cdot,s)) ds\\
\geq&\intnt \frac{1}{(4\pi r)^{\frac n2}} e^{-(r+\frac{d^2}{4r})} dr \inf_{s\in[0,t]} \io g(x,s,u(x,s),v(x,s)) dx\\
\geq& \zeta\intntau \frac{1}{(4\pi r)^{\frac n2}} e^{-(r+\frac{d^2}{4r})} dr \qquad \mbox{ in }\Om
\end{align*}
for any $t\in[\tau,\Tmax)$. 
With $\eta=\min\set{\frac{\inf_\Om v_0}2,\zeta\int_0^{\tau(u_0,v_0)} \frac{1}{(4\pi r)^{\frac n2}}e^{-(r+\frac{d^2}{4r})} dr}$ this proves the claim.
\end{proof}

With this lemma we can weaken the assumptions on the sensitivity $S$ so as to allow for a singularity at $v=0$.

\begin{theorem}\label{thm:exSingular}
i) Let $S\in C^{1+\omega}_{loc}(\Ombar\times[0,\infty)\times\R\times(0,\infty))$ for some $\omega\in(0,1)$ and apart from the condition on $S$ let the assumptions of Theorem \ref{thm:exBBTW} be satisfied.\\
Additionally, assume that $f$ is nonnegative and $g(x,t,u,v)\geq cu$ for some $c>0$ and any $(x,t,u,v)\in \Ombar\times[0,\infty)\times\R^2$ 
and that $\inf_\Om v_0>0$ and $\io u_0=:m>0$. 
Then there is $\Tmax>0$ such that \eqref{eq:CT} has a unique solution $(u,v)$ as in \eqref{eq:solnspaces} and such that \eqref{eq:atTmax} holds.\\
ii) Furthermore, 
if there are $K_1,K_2>0$ such that $f(x,t,u,v)\leq K_1$ and $g(x,t,u,v)\leq K_2(1+u)$ for all $(x,t,u,v)\in \Om\times(0,\infty)^3$, and for every $\eta>0$, $|S|$ is bounded on $\Om\times(0,\infty)^2\times (\eta,\infty)$, and 
if $n=2$ and there is $M>0$ such that 
\begin{equation}\label{eq:intsbd}
 \io u(\cdot,t)\ln u(\cdot,t)\leq M,\qquad\mbox{and}\qquad \io |\na v(\cdot,t)|^2 \leq M\qquad\mbox{ for all }t\in[0,\Tmax)
\end{equation}
then $(u,v)$ is global and bounded.
\end{theorem}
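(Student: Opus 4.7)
The plan falls naturally into two stages. First, I would use Lemma \ref{lem:vunifbdbelow} to produce a uniform positive lower bound on $v$. Integrating the $u$-equation over $\Om$ with $f\geq 0$ and $\io u_0=m>0$ yields $\io u(\cdot,t)\geq m$ for every $t\in[0,\Tmax)$. Combined with the hypothesis $g\geq cu$ from part~i), this forces $\inf_{s\in[0,\Tmax)}\io g(\cdot,s,u,v)\geq cm>0$, so Lemma \ref{lem:vunifbdbelow} provides some $\eta>0$ with $v\geq\eta$ on $\Ombar\times[0,\Tmax)$. Since $|S|$ is assumed to be bounded on $\Om\times(0,\infty)^2\times(\eta,\infty)$, say by $K_S$, the drift coefficient $uS(\cdot,u,v)\,\na v$ can henceforth be treated as in the classical, non-singular case.

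Second, I would run the standard two-dimensional bootstrap starting from \eqref{eq:intsbd}. A uniform $L^1$-bound on $u$ follows at once from the elementary splitting $\io u\leq 2|\Om|+(\ln 2)^{-1}\io u\ln u\leq 2|\Om|+M/\ln 2$. Testing the $u$-equation against $u^{p-1}$ for $p>1$ and applying Young's inequality yields
\[
 \ddt \io u^p + c_1(p)\io |\na u^{p/2}|^2 \leq c_2(p)\io u^p|\na v|^2 + c_3(p)\io u^{p-1},
\]
which closes once $\na v$ is known in a sufficiently good Lebesgue space. The latter is generated iteratively from the $v$-equation: the inequality $g\leq K_2(1+u)$ together with the Neumann heat semigroup smoothing $\norm[L^q]{\na e^{t(\Lap-1)}w}\leq C(1+t^{-1/2-(1/r-1/q)})e^{-t}\norm[L^r]{w}$ (for $r\leq q$) converts any $L^r$-bound on $u$ into an improved $L^q$-bound on $\na v$, while a two-dimensional Gagliardo-Nirenberg inequality then turns the resulting control on $\io u^p|\na v|^2$ (using the seed $\io|\na v|^2\leq M$ and the evolving $L^p$-bound on $u$) back into a higher $L^p$-bound on $u$. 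Starting from the data $\na v\in L^\infty((0,\Tmax);\Lzom)$ and $u\in L^\infty((0,\Tmax);\Leom)$, finitely many rounds, concluded by a Moser/Alikakos iteration, deliver $u\in L^\infty(\Om\times(0,\Tmax))$ and $\na v\in L^\infty((0,\Tmax);\Lom{q})$ for some $q>2$. Global existence and boundedness then follow immediately from the extensibility criterion \eqref{eq:atTmax}.

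The principal technical obstacle is the closing of the iteration so that all constants remain uniform in $t$: in two dimensions the $L\log L$-bound on $u$ and the $H^1$-bound on $v$ are both borderline, and the interpolation and Gagliardo-Nirenberg inequalities must be inserted at precisely the right exponents. The exponential decay provided by the $-v$ term in the chemical equation is the structural ingredient that makes the time integrals in the Duhamel representation of $v$ converge uniformly, and it is exactly this feature that allows the bootstrap to terminate with $t$-independent bounds rather than merely locally-in-time estimates.
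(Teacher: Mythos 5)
There is a genuine gap in your treatment of part i): your first stage presupposes the very solution whose existence is to be proved. Lemma \ref{lem:vunifbdbelow} is formulated for solutions of the regular problem \eqref{eq:CTallg} furnished by Theorem \ref{thm:exBBTW}, and for the singular sensitivity that theorem is not applicable; you cannot ``integrate the $u$-equation on $[0,\Tmax)$'' and invoke the lemma before a solution has been constructed. The missing idea is the regularization step: since $\eta=\eta(u_0,v_0,cm)$ in Lemma \ref{lem:vunifbdbelow} depends only on the data (not on the solution or on $\Tmax$), one first defines a modified sensitivity $S_\eta$ which coincides with $S$ for $v\geq\eta$ and is frozen, via a smooth cutoff, at the value $S(\cdot,\cdot,\cdot,\frac\eta2)$ for small $v$, so that $S_\eta\in C^{1+\omega}_{loc}(\Ombar\times[0,\infty)\times\R^2)$. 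Theorem \ref{thm:exBBTW} then gives a solution of the problem with $S_\eta$; only now do the mass estimate $\io u\geq m$ (from $f\geq0$) and $g\geq cu$ yield $\io g\geq cm$, so Lemma \ref{lem:vunifbdbelow} shows a posteriori that $v\geq\eta$, whence this solution in fact solves the singular problem \eqref{eq:CT}. Note also that boundedness of $|S|$ on sets $\set{v>\eta}$ is a hypothesis of part ii) only, so it is not available in part i); and your proposal does not address uniqueness, which is part of the assertion and is obtained in the paper by showing (through a continuity argument at $t_0=\inf\set{t:\inf_\Om v(\cdot,t)<\eps}$, using Lemma \ref{lem:vunifbdbelow} again) that every solution of the singular problem also solves the regularized one, so that uniqueness can be transferred from Theorem \ref{thm:exBBTW}.

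For part ii) you take a different route from the paper: the paper simply notes that the solution solves the regularized problem with non-singular, bounded sensitivity and cites the two-dimensional boundedness result of \cite{BBTW} (Lemma 4.3 there), which converts \eqref{eq:intsbd} directly into a uniform bound on $\norm[\Liom]{u(\cdot,t)}+\norm[\Weqom]{v(\cdot,t)}$ and concludes via \eqref{eq:atTmax}. Your sketch essentially re-proves that cited lemma, which is legitimate in principle, but as written it conceals the decisive point: an iteration of the form ``$L^r$ bound on $u$ gives $L^q$ bound on $\na v$ gives higher $L^p$ bound on $u$'' starting only from the mass bound does not close in two dimensions (this is exactly the critical-mass phenomenon); the $L\log L$ bound must enter quantitatively, e.g. through a Biler--Hebisch--Nadzieja/extended Gagliardo--Nirenberg type inequality in which the critical term $\io u^p|\na v|^2$ (or $\io u^{p+1}$) is controlled by a small multiple of $\io|\na u^{p/2}|^2$ precisely because the factor $\io u\ln u$ multiplying it is bounded. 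Acknowledging that ``the exponents must be inserted at precisely the right places'' does not substitute for this step; either carry it out or, as the paper does, delegate it to the quoted lemma.
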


\begin{proof}
i) Let $\eta:=\eta(u_0,v_0,cm)$ be as in Lemma \ref{lem:vunifbdbelow}. Let $\zeta\col \R\to [0,1]$ be a smooth, monotone decreasing function with $\zeta(\frac{\eta}2)=1$ and $\zeta(\eta)=0$. Define 
\[S_\eta(x,t,u,v):=\begin{cases}S(x,t,u,\frac\eta2),& (x,t,u,v)\in \Ombar\times[0,\infty)\times\R\times(-\infty,\frac\eta2),\\
        \zeta(v)S(x,t,u,\frac\eta2)+(1-\zeta(v))S(x,t,u,v),& (x,t,u,v)\in \Ombar\times[0,\infty)\times\R\times[\frac\eta2,\infty).\end{cases}
       \]
Then $S_\eta\in C^{1+\omega}_{loc}(\Ombar\times[0,\infty)\times\R^2)$ and $S$ and $S_\eta$ agree for $v\geq \eta$. 
Let us denote by \eqref{eq:CTallg}$_\eta$ problem \eqref{eq:CTallg} with $S$ replaced by $S_\eta$. Then we can apply Theorem \ref{thm:exBBTW} to \eqref{eq:CTallg}$_\eta$ and obtain a solution $(u,v)$ with the required properties \eqref{eq:solnspaces} and \eqref{eq:atTmax}. 
Nonnegativity of $f$ and integration of the first equation of \eqref{eq:CTallg}$_\eta$ entail that $\io u(t)\geq m$ for all $t\in[0,\Tmax)$ and accordingly $\io g(x,t,u(x,t),v(x,t)) dx\geq cm>0$ for all $t\in [0,\Tmax)$. 
Therefore, by Lemma \ref{lem:vunifbdbelow}, $v\geq \eta$ and hence $(u,v)$ solves \eqref{eq:CTallg} as well.\\
In order to carry over the uniqueness statement from Theorem \ref{eq:CTallg}, we ensure that any solution of \eqref{eq:CTallg} also solves \eqref{eq:CTallg}$_\eta$ in $\Om\times[0,\Tmax)$: 
Let $v$ be a solution of \eqref{eq:CTallg}. Let $\eps\in(0,\frac\eta2)$ and define $t_0=\inf\set{t: \inf_\Om v(t)<\eps}\in(0,\infty]$. Then $(u,v)$ solves \eqref{eq:CTallg}$_\eta$ in $(0,t_0)$. Assume $t_0<\infty$. Then by Lemma \ref{lem:vunifbdbelow} and continuity of $v$, $v(x,t_0)\geq \eta>\eps=\inf_\Om v(\cdot,t)$ for all $x\in \Om$, a contradiction. \\
ii) Since $(u,v)$ is a solution of \eqref{eq:CTallg}$_\eta$, we can apply \cite[Lemma 4.3]{BBTW}, which turns \eqref{eq:intsbd} into a uniform-in-time bound on $\norm[\Liom]{u(\cdot,t)}+\norm[\Weqom]{v(\cdot,t)}$, thus asserting global existence by means of \eqref{eq:atTmax} and boundedness.
\end{proof}

\begin{remark}
 Throughout the remaining part of the article, we will assume that $\Om\sub \R^2$ is a bounded, smooth domain, that $0\leq u_0\in C^0(\Ombar)$, $q>2$ and $v_0\in W^{1,q}(\Om)$, $\inf_\Om v_0>0$ and $\io u_0=:m>0$.\\
 Then, in particular, Theorem \ref{thm:exSingular} is applicable to \eqref{eq:CT}. 
 Furthermore, any solution $(u,v)$ of \eqref{eq:CT} satisfies 
\begin{equation}\label{eq:intuconst}
 \io u(t) = m \qquad \mbox{for all }t\in[0,\Tmax).
\end{equation}
\end{remark}

For the purpose of using it in the next proof, let us recall the well-known Gagliardo-Nirenberg inequality: 
\begin{lemma}\label{lem:GN}
 Let $\Om\sub \Rn$ be a bounded smooth domain. Let $j\geq 0, k\geq 0$ be integers and $p,q,r,s>1$. 
 There are constants $c_1, c_2>0$ such that for any function $w\in \Lqom \cap L^s(\Om)$ with $D^k w\in L^r(\Om)$, 
\[
 \norm[p]{D^j w}\leq c_1\norm[r]{D^k w}^{\al} \norm[q]{w}^{1-\al} + c_2\norm[s]{w}, 
\]
whenever 
\(
 \frac1p=\frac jn+\left(\frac1r-\frac kn\right) \al + \frac{1-\al}q\) and \(\frac jk\leq \al< 1.
\)
\end{lemma}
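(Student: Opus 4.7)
Since Lemma \ref{lem:GN} is the classical Gagliardo--Nirenberg interpolation inequality, my plan would not be to give a new proof but simply to cite Nirenberg's 1959 paper \emph{On elliptic partial differential equations} (or any standard monograph on Sobolev spaces). For completeness I briefly sketch the standard two-step strategy that such a citation invokes.

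First I would work on $\Rn$ for Schwartz functions. The exponent relation
\[
 \tfrac1p = \tfrac{j}{n} + \alpha\!\left(\tfrac1r - \tfrac kn\right) + \tfrac{1-\alpha}q
\]
is already forced by dimensional analysis: replacing $w$ by $w_\lambda(x):=w(\lambda x)$ and equating the resulting powers of $\lambda$ on both sides of the putative inequality yields exactly the prescribed formula. To prove the estimate itself, I would write $D^j w$ as a Riesz potential of $D^k w$ and apply the Hardy--Littlewood--Sobolev inequality to bound it in $L^{r^*}(\Rn)$ with $r^*=nr/(n-(k-j)r)$ (using an ordinary Sobolev embedding if instead $(k-j)r\geq n$); H\"older's inequality then interpolates this $L^{r^*}$--norm with $\|w\|_q$ using the admissible weight $\alpha\in[j/k,1)$ to produce the product $\|D^k w\|_r^\alpha\|w\|_q^{1-\alpha}$.

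Second I would transfer the inequality from $\Rn$ to the bounded smooth domain $\Om$ via a total extension operator $E\col W^{k,r}(\Om)\to W^{k,r}(\Rn)$ (e.g.\ the Stein extension), chosen so as to be simultaneously bounded on all the relevant Lebesgue scales. Applying the $\Rn$--estimate to $Ew$ and restricting back to $\Om$ then gives the desired bound, and the additive term $c_2\|w\|_s$ reflects the low-frequency correction that is unavoidable on bounded domains, since constants on $\Om$ are controlled by $\|w\|_s$ but not by $\|D^k w\|_r$. The main technical obstacle, were one to carry the proof out in detail, would be the Hardy--Littlewood--Sobolev step on $\Rn$ together with the bookkeeping needed to ensure that all exponent constraints remain compatible with the admissible range $j/k\leq\alpha<1$; for the purposes of the present paper, however, Lemma \ref{lem:GN} enters only as a citable tool in the estimates of the following section, so none of these computations need to be revisited here.
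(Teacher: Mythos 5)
Your proposal matches the paper exactly: the paper's ``proof'' of Lemma \ref{lem:GN} is simply a citation to Nirenberg's 1959 article (p.~126), and you likewise treat the lemma as a citable classical result, with your sketch of the scaling/interpolation/extension argument serving only as optional background. Nothing further is needed.
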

\begin{proof}
 Cf. \cite[p. 126]{Nirenberg_59}
\end{proof}

\begin{lemma}\label{lem:intandintgivesGE}
 Let $(u,v)$ be a solution to \eqref{eq:CT}, let $\tau=\min\set{1,\frac{\Tmax}2}$, and assume there exists $C>0$ such that 
\[
 \int_t^{t+\tau} \io \frac{|\na u|^2}u \leq C\quad \mbox{for any }t\in (0,\Tmax-\tau) 
\]
and that 
\[
 \io u(t)\ln u(t) \leq C \quad \mbox{for any }t\in(0,\Tmax)
\]
Then $\Tmax=\infty$ and $(u,v)$ is bounded.
\end{lemma}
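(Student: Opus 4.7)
The plan is to reduce the statement to Theorem~\ref{thm:exSingular}(ii), which requires a uniform-in-time bound on both $\io u\ln u$ (already given) and $\io|\na v|^2$. So the only real work is to extract the latter from the two hypotheses, and the bridge between them will be a Gagliardo--Nirenberg estimate on $\sqrt u$ combined with conservation of mass \eqref{eq:intuconst}.

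First I would test the second equation of \eqref{eq:CT} with $-\Lap v$. Since $\Om$ is smooth and $\delny v\bdry=0$, integration by parts gives $\io v\Lap v=-\io|\na v|^2$, so
\[
\ddt\io|\na v|^2 + 2\io|\na v|^2 + 2\io|\Lap v|^2 = -2\io u\Lap v \leq \io|\Lap v|^2 + \io u^2,
\]
and hence
\[
\ddt\io|\na v|^2 + 2\io|\na v|^2 \leq \io u^2.
\]
To control $\io u^2$, set $w:=\sqrt u$; conservation of mass gives $\norm[2]{w}^2=\io u=m$, while the first hypothesis yields $\int_t^{t+\tau}\norm[2]{\na w}^2=\tfrac14\int_t^{t+\tau}\io\frac{|\na u|^2}{u}\leq \tfrac C4$. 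Applying Lemma~\ref{lem:GN} in dimension $n=2$ with $j=0$, $k=1$, $p=4$, $r=q=s=2$ and $\al=\tfrac12$,
\[
\io u^2 = \norm[4]{w}^4 \leq \left(c_1\norm[2]{\na w}^{1/2}\norm[2]{w}^{1/2}+c_2\norm[2]{w}\right)^4 \leq C_1\norm[2]{\na w}^2 + C_2
\]
with constants $C_1,C_2$ depending only on $m$ and $\Om$. Integrating in time,
\[
\int_t^{t+\tau}\io u^2 \leq C_1\int_t^{t+\tau}\io\frac{|\na u|^2}{4u} + C_2\tau \leq C_3
\]
uniformly in $t\in(0,\Tmax-\tau)$.

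Next I would integrate the ODE $y'(t)+2y(t)\leq h(t)$ with $y(t):=\io|\na v(t)|^2$ and $h(t):=\io u(t)^2$. The variation-of-constants formula gives, for any $t\in[0,\Tmax)$,
\[
y(t)\leq y(0)e^{-2t} + \int_0^t h(s)e^{-2(t-s)}\,ds,
\]
and splitting $[0,t]$ into subintervals of length $\tau$ and using the uniform bound on $\int_s^{s+\tau}h$ together with geometric summation of $e^{-2k\tau}$ shows that $\io|\na v(t)|^2$ is bounded uniformly in $t\in[0,\Tmax)$. (Near $t=0$ the value $y(0)$ is finite by the assumption $v_0\in W^{1,q}(\Om)$ with $q>2$.)

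Finally, having both bounds in \eqref{eq:intsbd}, and since for \eqref{eq:CT} we have $f\equiv 0$ and $g(x,t,u,v)=u$, so that trivially $f\leq K_1$ and $g\leq K_2(1+u)$, and since $|S(v)|=\chi/v$ is bounded on $(\eta,\infty)$ for every $\eta>0$, Theorem~\ref{thm:exSingular}(ii) applies and yields $\Tmax=\infty$ together with boundedness of $(u,v)$. The only mildly nontrivial point is the Gagliardo--Nirenberg step, which is entirely standard in the two-dimensional setting; the rest is bookkeeping.
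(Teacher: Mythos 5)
Your proposal is correct and follows essentially the same route as the paper: the Gagliardo--Nirenberg estimate for $\sqrt u$ combined with mass conservation to get a uniform space-time bound on $\io u^2$, testing the second equation with $-\Lap v$ to obtain the linear differential inequality for $y(t)=\io|\na v(t)|^2$, the variation-of-constants/geometric-summation argument over subintervals of length $\tau$, and finally Theorem~\ref{thm:exSingular}(ii). No gaps; the details (choice of GN exponents, the ODE comparison, and the verification of the hypotheses on $f$, $g$, $S$) all match the paper's argument.
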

\begin{proof}
Let $c_1,c_2>0$ be the constants yielded by Lemma \ref{lem:GN} for $j=0$, $k=1$, $q=2$, $r=2$, $\al=\frac12$. Then with $m$ from \eqref{eq:intuconst},  
\begin{align*}
 \int_t^{t+\tau}\io u^2 =& \int_t^{t+\tau}\norm[\Lom4]{\sqrt{u}}^4\leq\int_t^{t+\tau}\left(c_1\norm[\Lzom]{\na \sqrt{u}}^\al\norm[\Lzom]{\sqrt{u}}^{1-\al} + c_2\norm[\Lzom]{\sqrt u}\right)^4\\\leq& \int_t^{t+\tau} \left(c_1  \left(\io \frac{|\na u|^2}{4u} \right)^{\frac14}m^{\frac14} + c_2m^{\frac12}\right)^4\leq \frac{c_1^4mC}4+c_2^4m^2
\end{align*}
holds for any $t\in(0,\Tmax-\tau)$.\\
Multiplying the second equation of \eqref{eq:CT} by $-\Lap v$ and integrating, from Young's inequality we obtain 
\[
 \io |\na v(t)|^2 \leq \io |\na v_0|^2 - \intnt\io |\Lap v|^2 - \intnt\io |\na v|^2 +\frac12\intnt\io u^2 + \frac12 \intnt\io |\Lap v|^2 \qquad\mbox{on }(0,\Tmax), 
\]
that is, $y(t):=\io |\na v(t)|^2$ satisfies the differential inequality $y'+y\leq f$ on $[0,\Tmax)$, where $f=\frac12\io u^2$ satisfies $\int_t^{t+\tau} |f(s)| ds\leq C$ for all $t\in (0,\Tmax-\tau)$ and with some constant $C>0$. Let $z$ be a solution to $z'+z=f$, $z(0)=z_0=\io |\na v_0|^2$ and observe that the variation-of-constants formula entails 
\begin{align*}
 z(t)-e^{-t} z_0 =& \intnt e^{-s}f(t-s) ds \leq \sum_{k=0}^{\lfloor t/\tau\rfloor-1} \int_{k\tau}^{(k+1)\tau} e^{-s} |f(t-s)| ds + \int_{\tau\lfloor t/\tau\rfloor}^t e^{-s} |f(t-s)| ds\\
 \leq& \sum_{k=0}^{\lfloor t/\tau\rfloor-1} e^{-k\tau} C  + C \leq C \left( 1+ \frac 1{1-e^{-\tau}}\right) \qquad \mbox{for }t\in(0,\Tmax),
\end{align*}
so that an ODE comparison yields boundedness of  $y=\io |\na v(t)|^2$.\\
Together with the second assumption, the bound on $\io u\ln u$, this is sufficient to conclude global existence and boundedness of solutions by Theorem \ref{thm:exSingular} ii).
\end{proof}

\section{Some useful general estimates and identities}
\label{sec:estimates}

\begin{lemma}
 Let $\Om$ be convex and let $w\in C^2(\Ombar)$ satisfy $\delny w\bdry=0$. Then for all $x\in \dom$ also $\delny |\na w(x)|^2\leq 0$.
\end{lemma}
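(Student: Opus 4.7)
The plan is to reduce the pointwise boundary inequality to the statement that the second fundamental form of $\partial\Omega$ is nonnegative (in the outward-normal convention), which is precisely the analytic content of convexity.

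First I would unfold the expression: at $x \in \partial\Omega$,
\[
\delny |\na w|^2 = 2\sum_{i} (\del_i w)(\delny \del_i w) = 2\sum_{i,k} \nu^k (\del_i w)(\del_k\del_i w),
\]
so the task is to show the symmetric bilinear form $D^2 w$ evaluated at $(\na w, \nu)$ is $\leq 0$ on $\partial\Omega$. The crucial observation is that the Neumann condition $\delny w = 0$ means $\na w$ is tangential to $\partial\Omega$ at every boundary point, so this is a ``tangent vs. normal'' pairing of the Hessian.

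Next I would exploit the boundary condition by differentiating it tangentially. For any smooth tangential vector field $\tau$ on $\partial\Omega$,
\[
0 = \tau^k \del_k\bigl((\del_i w)\nu^i\bigr) = \tau^k\nu^i \del_k\del_i w + (\del_i w)\tau^k \del_k \nu^i \qquad\text{on }\partial\Omega,
\]
which rearranges to
\[
D^2 w(\tau,\nu) = -\langle \na w,\, \del_\tau \nu\rangle.
\]
Specializing $\tau := \na w$ (admissible because $\na w$ is tangential on $\partial\Omega$) and using symmetry of the Hessian together with the previous display gives
\[
\delny |\na w|^2 = 2\,D^2 w(\na w,\nu) = -2\,\mathrm{II}(\na w,\na w),
\]
where $\mathrm{II}(\tau,\tau) = \langle \del_\tau \nu,\tau\rangle$ is the second fundamental form of $\partial\Omega$ with respect to the outer unit normal.

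The only remaining step is to invoke convexity of $\Omega$, which is equivalent to $\mathrm{II} \geq 0$ on $\partial\Omega$ in this sign convention; consequently $\mathrm{II}(\na w,\na w) \geq 0$ and the desired inequality follows. I do not expect any serious obstacle: the computation is short, the only subtlety being the correct sign convention for $\mathrm{II}$, which one can verify against the model case of a ball (where the outward normal rotates in the direction of any tangential motion, giving positive principal curvatures).
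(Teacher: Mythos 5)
Your argument is correct. Note that the paper itself gives no proof here at all --- it simply cites Lemme 2.I.1 of Lions' paper --- so your computation supplies the standard argument behind that reference: differentiate the identity $\partial_\nu w=0$ tangentially along $\partial\Omega$, use that $\nabla w$ is itself tangential at boundary points, and conclude $\partial_\nu|\nabla w|^2=2\,D^2w(\nabla w,\nu)=-2\,\mathrm{II}(\nabla w,\nabla w)\le 0$, since convexity of $\Omega$ gives $\mathrm{II}\ge 0$ with respect to the outer normal (only this implication is needed; the converse, which would require connectedness of $\partial\Omega$, is irrelevant here). The sign check against the ball is exactly the right sanity test, and the only implicit ingredients --- smoothness of $\partial\Omega$ so that $\nu$ is $C^1$ along the boundary, and $w\in C^2(\overline\Omega)$ so the Hessian extends to the boundary --- are part of the paper's standing assumptions, so there is no gap.
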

\begin{proof}
 This is Lemme 2.I.1 of \cite{Lions}.
\end{proof}

 \begin{lemma}
 For all positive $w\in C^2(\Ombar)$ satisfying $\delny w\bdry=0$ 
 \[
  \io w |D^2 \ln w|^2=\io \frac1w |D^2 w|^2  +\io \frac1{w^2} |\na w|^2 \Lap w - \io \frac{|\na w|^4}{w^3}
 \]
 \end{lemma}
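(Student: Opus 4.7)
The plan is to do a direct pointwise computation of $|D^2\ln w|^2$, multiply by $w$, and then integrate, where exactly one integration by parts (making use of the Neumann condition) will convert a cross term into the two remaining terms on the right-hand side.

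First, since $w>0$, the quotient rule yields $\partial_{ij}\ln w=\frac{\partial_{ij}w}{w}-\frac{\partial_i w\,\partial_j w}{w^2}$. Squaring and summing over $i,j$ gives
\[
|D^2\ln w|^2=\frac{|D^2 w|^2}{w^2}-\frac{2}{w^3}\sum_{i,j}\partial_{ij}w\,\partial_i w\,\partial_j w+\frac{|\na w|^4}{w^4},
\]
so after multiplying by $w$ and integrating,
\[
\io w|D^2\ln w|^2=\io\frac{|D^2 w|^2}{w}-2\io\frac{1}{w^2}\sum_{i,j}\partial_{ij}w\,\partial_i w\,\partial_j w+\io\frac{|\na w|^4}{w^3}.
\]
The first and last terms match the desired identity (up to sign in the last one), and it remains to rewrite the middle term.

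The key observation is that $\sum_{i,j}\partial_{ij}w\,\partial_i w\,\partial_j w=\frac12\na w\cdot\na|\na w|^2$. Hence the middle term equals $-\io \frac{1}{w^2}\na w\cdot\na|\na w|^2$. Now I would integrate by parts, moving the gradient off $|\na w|^2$: since $\na w\cdot\ny=0$ on $\dom$ by the Neumann condition, the boundary contribution vanishes, and
\[
-\io\frac{\na w\cdot\na|\na w|^2}{w^2}=\io|\na w|^2\,\na\cdot\Bigl(\frac{\na w}{w^2}\Bigr)=\io\frac{|\na w|^2\Lap w}{w^2}-2\io\frac{|\na w|^4}{w^3}.
\]
Substituting this back into the expression for $\io w|D^2\ln w|^2$ combines the last term with the existing $+\io\frac{|\na w|^4}{w^3}$ to produce $-\io\frac{|\na w|^4}{w^3}$, and the identity follows.

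There is no real obstacle; the only point to watch is that $w>0$ everywhere so all the divisions make sense (this is assumed), and that the Neumann condition on $w$ is enough to kill the boundary term in the integration by parts — one does \emph{not} need a Neumann condition on $|\na w|^2$, since the normal factor $\na w\cdot\ny$ sits on $\na w$ rather than on $|\na w|^2$.
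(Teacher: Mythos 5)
Your proof is correct and follows essentially the same route as the paper: the pointwise expansion $\partial_{ij}\ln w=\frac{\partial_{ij}w}{w}-\frac{\partial_i w\,\partial_j w}{w^2}$ leading to the cross term $-\io\frac{1}{w^2}\na w\cdot\na|\na w|^2$, followed by a single integration by parts whose boundary term vanishes because $\delny w\bdry=0$. Your closing remark that the Neumann condition on $w$ itself (not on $|\na w|^2$) kills the boundary term is exactly the point the paper's sketch relies on implicitly.
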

 \begin{proof}
  This proof is also contained in the proof of \cite[Lemma 3.2]{wkfluid}. The equality rests on the pointwise identity
\[
 w|D^2 \ln w|^2 = w \left( -\frac1{w^2} |\na w|^2 + \frac 1w D^2 w \right)^2=\frac{|\na w|^4}{w^3} +\frac1w|D^2 w|^2 - \frac1{w^2} \na|\na w|^2\cdot \na w
\]
and integration by parts in the last term giving
\[
 - \io \frac1{w^2} \na|\na w|^2\cdot \na w = \io \frac1{w^2}|\na w|^2\Lap w - 2 \io \frac{|\na w|^4}{w^3}.
\]
\end{proof}

\begin{lemma}
 Let $w\in C^2(\Ombar)$ be positive and satisfy $\delny w\bdry=0$. Then
 \[
  -\io \frac1w |\Lap w|^2 = -\io \frac1w |D^2 w|^2 - \frac32 \io \frac{|\na w|^2\Lap w}{w^2} + \frac12 \io \frac{|\na w|^4}{w^3} + \frac12 \intdom \frac1w \delny |\na w|^2.
 \]
\end{lemma}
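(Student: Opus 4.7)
The plan is to pass from the left-hand side to the right-hand side via two rounds of integration by parts combined with the pointwise Bochner-type identity
\[
 \na w\cdot \na\Lap w = \tfrac12\Lap|\na w|^2 - |D^2 w|^2,
\]
valid in the interior for any $w\in C^3(\Ombar)$.

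First I would write $-\io \frac{(\Lap w)^2}{w} = -\io \Lap w\cdot\frac{\Lap w}{w}$ and move one Laplacian onto $\frac{\Lap w}{w}$ by integration by parts; the ensuing boundary term vanishes thanks to $\delny w = 0$, and expanding $\na\frac{\Lap w}{w}$ produces
\[
 -\io \frac{(\Lap w)^2}{w} = \io \frac{\na w\cdot \na\Lap w}{w} - \io \frac{|\na w|^2\Lap w}{w^2}.
\]
Substituting the Bochner identity rewrites the first summand as $\tfrac12\io \frac{\Lap|\na w|^2}{w} - \io \frac{|D^2 w|^2}{w}$, which already supplies the $|D^2 w|^2$-contribution of the claim and leaves only $\tfrac12\io \frac{\Lap|\na w|^2}{w}$ to process.

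For that last integral I would apply Green's second identity to the pair $(|\na w|^2,\tfrac1w)$: the boundary term $-\intdom |\na w|^2\,\delny\tfrac1w$ drops out because $\delny \tfrac1w = -\tfrac{\delny w}{w^2} = 0$, while $\intdom \tfrac{\delny|\na w|^2}{w}$ remains and (after multiplication by the factor $\tfrac12$ inherited from the Bochner step) reproduces the surface integral of the claim. The interior part becomes $\io |\na w|^2\Lap\tfrac1w$, and inserting $\Lap\tfrac1w = -\tfrac{\Lap w}{w^2} + 2\tfrac{|\na w|^2}{w^3}$ generates exactly the two remaining integrands $\tfrac{|\na w|^2\Lap w}{w^2}$ and $\tfrac{|\na w|^4}{w^3}$. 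Adding all contributions and combining the coefficients yields the asserted identity. I expect the only obstacle here to be careful bookkeeping of signs and numerical factors through the chain of integrations by parts; no inequality or regularity argument is required.
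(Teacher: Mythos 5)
Your overall route -- one integration by parts, the pointwise identity $\na w\cdot\na\Lap w=\tfrac12\Lap|\na w|^2-|D^2w|^2$, then Green's second identity for the pair $(|\na w|^2,\tfrac1w)$ -- is sound, and it is essentially the computation the paper outsources to its references (the paper gives no in-text proof, only the citations to \cite[Lemma 3.1]{wkfluid} and \cite[Lemma 2.3]{dPGG}), so a self-contained derivation is welcome. However, the ``careful bookkeeping'' you defer is precisely where the one nontrivial point sits, and carrying it out does \emph{not} reproduce the identity as printed. Your steps give
\[
 -\io \frac{|\Lap w|^2}{w}
 = -\io\frac{|D^2w|^2}{w}-\io\frac{|\na w|^2\Lap w}{w^2}
 +\frac12\io|\na w|^2\,\Lap\frac1w+\frac12\intdom\frac{\delny|\na w|^2}{w},
\]
and since $\frac12\io|\na w|^2\Lap\frac1w=-\frac12\io\frac{|\na w|^2\Lap w}{w^2}+\io\frac{|\na w|^4}{w^3}$ (the factor $\tfrac12\cdot 2=1$), the quartic term appears with coefficient $1$, not $\tfrac12$. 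This is not a defect of your method but a typo in the statement: only with coefficient $1$ is the identity consistent with the preceding lemma on $\io w|D^2\ln w|^2$ and with Lemma \ref{lem:iofraclapvv}(i), which the paper asserts to be a direct consequence of these lemmata; a one-dimensional test (where $|D^2w|^2=|\Lap w|^2$, the boundary term vanishes, and integration by parts gives $\tfrac32\int\frac{(w')^2w''}{w^2}=\int\frac{(w')^4}{w^3}$) confirms the coefficient $1$ as well. So rather than claiming the coefficients ``combine to the asserted identity,'' you should state that your argument proves the identity with $+\io\frac{|\na w|^4}{w^3}$ and flag the discrepancy with the printed $\tfrac12$; had you actually added up the contributions you would have hit this mismatch.

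A second, smaller gap concerns regularity: the lemma assumes only $w\in C^2(\Ombar)$, whereas your first integration by parts and the Bochner identity involve $\na\Lap w$, i.e.\ third derivatives -- you yourself state the pointwise identity for $w\in C^3(\Ombar)$ but never reconcile this with the hypothesis. To cover the stated assumption you need an approximation step (smooth functions satisfying $\delny w\bdry=0$ and converging in $C^2(\Ombar)$; every term in the final identity contains at most second derivatives of $w$, so the identity passes to the limit), or you should explicitly strengthen the hypothesis, which is harmless for the application in the paper since there the lemma is applied to $v(\cdot,t)$, which is smooth for $t>0$.
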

\begin{proof}
 This results from \cite[Lemma 3.1]{wkfluid} upon the choice of $h(w)=\frac1w$. The proof can be found in \cite[Lemma 2.3]{dPGG}.
\end{proof}

\begin{lemma}\label{lem:iofraclapvv}
(i) For all positive $w\in C^2(\Ombar)$ satisfying $\delny w\bdry=0$, 
 \[
  -\io \frac1w |\Lap w|^2 = -\io w|D^2 \ln w|^2-\frac12 \io \frac1{w^2}|\na w|^2 \Lap w +\frac12 \intdom \frac1w\delny |\na w|^2.
 \]
(ii) If furthermore $\Om$ is convex, then
\[
 -\io \frac1w |\Lap w|^2 \leq -\io w|D^2 \ln w|^2-\frac12 \io \frac1{w^2}|\na w|^2 \Lap w. 
\]
\end{lemma}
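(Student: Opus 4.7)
The plan for part (i) is to eliminate the unwanted term $\io \frac{1}{w}|D^2 w|^2$ between the two immediately preceding lemmas. The second lemma of the section expresses $\io w|D^2 \ln w|^2$ in terms of $\io \frac{1}{w}|D^2 w|^2$ together with $\io \frac{1}{w^2}|\na w|^2 \Lap w$ and $\io \frac{|\na w|^4}{w^3}$, while the third lemma expresses $-\io \frac{1}{w}|\Lap w|^2$ in terms of $-\io \frac{1}{w}|D^2 w|^2$, the same cross term with a different coefficient, a multiple of $\io \frac{|\na w|^4}{w^3}$ and the boundary term. Adding these two identities (so that the $\io \frac{1}{w}|D^2 w|^2$ contributions cancel), rearranging to isolate $-\io \frac{1}{w}|\Lap w|^2$, and collecting the coefficients of $\io \frac{1}{w^2}|\na w|^2 \Lap w$ and of $\io \frac{|\na w|^4}{w^3}$ should yield exactly the identity stated in (i).

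The key accounting is purely algebraic: nothing new has to be integrated by parts, because the Neumann boundary condition $\delny w\bdry=0$ has already been used inside the proofs of the two lemmas we are invoking. The only step that requires care is bookkeeping of the coefficients so that the two $\io \frac{|D^2 w|^2}{w}$ terms cancel and the remaining scalar terms collapse into the single term $-\frac{1}{2}\io \frac{1}{w^2}|\na w|^2 \Lap w$. I do not expect any conceptual obstacle here.

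For part (ii), the strategy is to combine (i) with the very first lemma of Section \ref{sec:estimates} (Lions' boundary inequality), which gives that on a convex domain with $\delny w\bdry=0$ one has $\delny |\na w|^2 \leq 0$ pointwise on $\dom$. Together with the positivity $w>0$, this implies
\[
 \frac{1}{2}\intdom \frac{1}{w}\delny |\na w|^2 \leq 0,
\]
so that this term in the identity from (i) may simply be discarded, producing the claimed inequality. This part is therefore a one-line consequence of (i) once the convexity hypothesis is invoked through the Lions lemma.
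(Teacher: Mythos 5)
Your plan is the same as the paper's: the printed proof is literally ``a direct consequence of the previous three lemmata'', i.e.\ eliminate $\io \frac1w|D^2w|^2$ between the second and third lemma of Section \ref{sec:estimates}, and for (ii) discard the boundary term using the Lions lemma together with $w>0$. Part (ii) of your argument is fine as stated. But the one step you defer --- ``bookkeeping of the coefficients'' --- is exactly where the difficulty sits, and you should actually carry it out: if you substitute the third lemma \emph{as printed}, with the term $+\frac12\io\frac{|\na w|^4}{w^3}$, the quartic terms do \emph{not} cancel. Solving the second lemma for $-\io\frac1w|D^2w|^2$ and inserting it gives
\[
 -\io \frac1w|\Lap w|^2 \;=\; -\io w|D^2\ln w|^2 \;-\;\frac12\io\frac{|\na w|^2\Lap w}{w^2}\;-\;\frac12\io\frac{|\na w|^4}{w^3}\;+\;\frac12\intdom\frac1w\delny|\na w|^2,
\]
with a leftover $-\frac12\io\frac{|\na w|^4}{w^3}$, so the remaining terms do not ``collapse into the single term'' you announce.

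The resolution is that the coefficient $\frac12$ in front of $\io\frac{|\na w|^4}{w^3}$ in that third lemma is a misprint; the correct identity (obtained from one integration by parts, the pointwise identity $\na w\cdot\na\Lap w=\frac12\Lap|\na w|^2-|D^2w|^2$, and the same integration by parts already used in the second lemma) is
\[
 -\io \frac1w|\Lap w|^2 \;=\; -\io\frac1w|D^2w|^2\;-\;\frac32\io\frac{|\na w|^2\Lap w}{w^2}\;+\;\io\frac{|\na w|^4}{w^3}\;+\;\frac12\intdom\frac1w\delny|\na w|^2 .
\]
A quick consistency check in one variable (where $D^2w=\Lap w=w''$ and Neumann data give $\int\frac{(w')^2w''}{w^2}=\frac23\int\frac{(w')^4}{w^3}$) confirms this coefficient and also confirms that the identity in (i) is correct as stated, while the printed version of the third lemma is not. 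With the corrected coefficient the $\io\frac{|\na w|^4}{w^3}$ contributions cancel exactly and your combination yields (i) verbatim, after which (ii) follows as you say. So: right approach, identical to the paper's, but the computation you skipped is precisely the place where you would have discovered that one of your ingredients needs a corrected coefficient --- do the bookkeeping rather than asserting it.
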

\begin{proof}
This is a direct consequence of the previous three lemmata.
\end{proof}

\begin{lemma}\label{lem:estvDDlnv}
 There is $c_0>0$ such that for all positive $w\in C^2(\Ombar)$ fulfilling $\delny w\bdry=0$ the following estimate holds:
 \[
  \io w |D^2 \ln w|^2 \geq c_0 \io \frac{|\na w|^4}{w^3}.
 \]
\end{lemma}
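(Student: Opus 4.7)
The plan is to substitute $\phi:=\ln w$, which is of class $C^2(\Ombar)$ since $w$ is positive. Using $\na w=w\na\phi$ one sees that $\frac{|\na w|^4}{w^3}=w|\na\phi|^4$, while trivially $w|D^2\ln w|^2=w|D^2\phi|^2$, so the claim is equivalent to
\[
\io w|D^2\phi|^2\geq c_0\io w|\na\phi|^4,
\]
and the Neumann condition $\delny w\bdry=0$ transfers to $\delny\phi\bdry=0$.

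The key manipulation is to rewrite
\[
\io w|\na\phi|^4=\io\na w\cdot\bigl(|\na\phi|^2\na\phi\bigr)
\]
(again using $\na w=w\na\phi$) and integrate by parts. The boundary contribution $\intdom w|\na\phi|^2\delny\phi$ vanishes by $\delny\phi\bdry=0$, and the identity $\na\cdot(|\na\phi|^2\na\phi)=|\na\phi|^2\Lap\phi+2D^2\phi(\na\phi,\na\phi)$ yields
\[
\io w|\na\phi|^4=-\io w|\na\phi|^2\Lap\phi-2\io w\,D^2\phi(\na\phi,\na\phi).
\]

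From here I would use the pointwise bounds $|D^2\phi(\na\phi,\na\phi)|\leq|D^2\phi|\,|\na\phi|^2$ and, exploiting $n=2$, $|\Lap\phi|\leq\sqrt{2}\,|D^2\phi|$, followed by a single Cauchy--Schwarz inequality in $L^2(\Om)$ with weight $w$, to obtain
\[
\io w|\na\phi|^4\leq(2+\sqrt{2})\Bigl(\io w|D^2\phi|^2\Bigr)^{1/2}\Bigl(\io w|\na\phi|^4\Bigr)^{1/2}.
\]
Squaring and cancelling yields $\io w|\na\phi|^4\leq(6+4\sqrt{2})\io w|D^2\phi|^2$, i.e.\ the desired estimate with $c_0=\frac{1}{6+4\sqrt{2}}$. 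The most delicate point is checking that the boundary term in the integration by parts vanishes, for which the Neumann condition on $w$ is essential; the two-dimensionality of $\Om$ enters only through the elementary bound $|\Lap\phi|\leq\sqrt{2}\,|D^2\phi|$, and no convexity of $\Om$ is required.
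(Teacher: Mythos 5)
Your argument is correct, and it is essentially the proof behind the paper's citation: the paper simply refers to \cite[Lemma 3.3]{wkfluid}, whose argument is exactly this integration by parts of $\io w|\na\phi|^4=\io\na w\cdot(|\na\phi|^2\na\phi)$ followed by the pointwise bounds and a weighted Cauchy--Schwarz step. Your constant $c_0=\frac1{6+4\sqrt2}=\frac1{(2+\sqrt2)^2}$ agrees with the value recorded in Remark \ref{rem:c0}, and your observation that no convexity of $\Om$ is needed is consistent with the statement of the lemma.
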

\begin{proof}
 An even more general version of this lemma and its proof can be found in \cite[Lemma 3.3]{wkfluid}.
\end{proof}
\begin{remark}\label{rem:c0}
 As can be seen from the referenced lemma, the constant in the above statement can be chosen to be $\frac1{(2+\sqrt{2})^2}$.
\end{remark}

\section{The energy functional. Proof of Theorem \ref{thm:main}}
\label{sec:proofmainthm}
In this section let us investigate the energy functional defined by 
\begin{equation}\label{eq:defF}
 \Fab(u(t),v(t))=\io u(t)\ln u(t) - a \io u(t)\ln v(t) + b \io |\na \sqrt{v(t)}|^2, \qquad t\in[0,\Tmax), 
\end{equation}
for nonnegative parameters $a,b$.

If we want to gain useful information from this functional, the upper bounds on its derivative that we will derive, should be accompanied by bounds for $\Fab$ from below.
In order to ensure those, let us first provide the following estimate for solutions of \eqref{eq:CT}.

\begin{lemma}\label{lem:intvp} \label{lem:intuconst}
Let $(u,v)$ be a solution to \eqref{eq:CT}. For any $p>0$ there is $C_p>0$ such that 
 \[
  \io v^p(t) \leq C_p \qquad \mbox{for any }t\in[0,\Tmax).
 \]
\end{lemma}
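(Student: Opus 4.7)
The plan is to exploit linearity of the $v$-equation together with the mass conservation of $u$. Applying the variation-of-constants formula to $v_t=\Lap v-v+u$ yields, for $t\in[0,\Tmax)$,
\[
 v(t) = e^{t(\Lap-1)}v_0 + \intnt e^{(t-s)(\Lap-1)}u(s)\,ds.
\]
Fixing $p\in[1,\infty)$, I would invoke the standard $L^1$--$L^p$ smoothing estimate for the Neumann heat semigroup on the two-dimensional bounded smooth domain $\Om$,
\[
 \norm[\Lpom]{e^{\tau\Lap}w} \leq c_1\bigl(1+\tau^{-(1-\frac{1}{p})}\bigr)\norm[\Leom]{w} \qquad (\tau>0,\ w\in\Leom),
\]
as well as the $L^p$-contractivity $\norm[\Lpom]{e^{\tau\Lap}w}\leq\norm[\Lpom]{w}$ applied to the initial-datum term. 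Combining these with $\norm[\Leom]{u(s)}=m$ from \eqref{eq:intuconst} gives
\[
 \norm[\Lpom]{v(t)} \leq \norm[\Lpom]{v_0} + c_1 m\intnt e^{-(t-s)}\bigl(1+(t-s)^{-(1-\frac{1}{p})}\bigr)\,ds,
\]
and substituting $r=t-s$ bounds the right-hand side by $\norm[\Lpom]{v_0}+c_1 m\int_0^\infty e^{-r}(1+r^{-(1-\frac{1}{p})})\,dr$.

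The decisive feature is that in two space dimensions the singularity exponent $1-\frac{1}{p}$ is strictly below one for every finite $p$, so the $r$-integral converges, and the resulting bound is $t$-independent. The initial-datum norm $\norm[\Lpom]{v_0}$ is finite for every such $p$ by the Sobolev embedding $\Weqom\hookrightarrow\Liom$ in two dimensions (which uses $q>2$). Raising the resulting inequality to the $p$-th power produces the constant $C_p$ for $p\geq 1$. The remaining case $p\in(0,1)$ follows at once from H\"older's inequality $\io v^p\leq|\Om|^{1-p}(\io v)^p$, reducing it to the $L^1$-bound just established.

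I do not expect a genuine obstacle here: the whole proof reduces to heat-semigroup smoothing together with the a priori mass identity \eqref{eq:intuconst}. A purely testing-based alternative (multiplying by $v^{p-1}$, absorbing the diffusion term, and iterating on $p$ after first observing $\frac{d}{dt}\io v=-\io v+m$ so that $\io v\leq\max\{\io v_0,m\}$) is available but notationally heavier, so I would favour the semigroup route.
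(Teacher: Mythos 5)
Your argument is essentially the paper's own proof: Duhamel's formula for the second equation, the $L^1$--$L^p$ smoothing estimates for the Neumann heat semigroup combined with the mass identity \eqref{eq:intuconst}, and a trivial reduction for $p\in(0,1)$ (the paper uses $v^p\le 1+v$, you use H\"older — immaterial). The only cosmetic difference is that the paper splits $u(s)=(u(s)-m)+m$ before applying the semigroup estimates, while you apply the smoothing bound to $u(s)$ directly and draw the time decay from the factor $e^{-(t-s)}$; both are valid.
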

\begin{proof}
Since $t\mapsto \norm[\Leom]{u(t)}$ is constant by \eqref{eq:intuconst}, for $p\ge 1$ this is a consequence of Duhamel's formula for the solution of the second equation of \eqref{eq:CT} and estimates for the Neumann heat semigroup, which can e.g. be found in \cite[Lemma 1.3]{wk_aggrvsglobdiffbeh}: They provide $C>0$ such that for all $t\in(0,\Tmax)$, 
 \begin{align*}
  \norm[L^p(\Om)]{v(t)} \leq& \norm[\Lpom]{e^{t(\Lap-1)} v_0} + \intnt \norm[\Lpom]{e^{(t-s)(\Lap-1)}(u(s)-m)} +\norm[\Lpom]{e^{-(t-s)} m}ds\\
  \leq& \norm[\Lpom]{v_0} + \intnt \left(C(1+(t-s)^{-\frac n2(1-\frac1p)}) e^{-(t-s)} \norm[\Leom]{u(s)-m} + e^{-(t-s)} m |\Om|^{\frac1p}\right)ds. 
 \end{align*}
 The case $p\in(0,1)$ then follows from $v^p\leq 1+v$.
\end{proof}

The following lemma gives bounds from below as well as means to turn boundedness of $\Fab(u,v)$ into boundedness of $\io u\ln u$.

\begin{lemma}\label{lem:Fabgeq}
 Let $a,b\geq 0$. For any solution $(u,v)$ of \eqref{eq:CT}, there is $\gamma\in\R$ such that 
\[
 \Fab(u,v)\geq \frac12 \io u\ln u - \gamma \qquad \mbox{on }(0,\Tmax). 
\]
\end{lemma}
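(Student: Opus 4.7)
The plan is to exploit that $b\io|\na\sqrt v|^2\geq 0$ and absorb the indefinite cross-term $-a\io u\ln v$ into $\io u\ln u$ by means of a Young-type inequality, then to control the remaining term by the $L^p$-bounds on $v$ from Lemma \ref{lem:intvp} together with the mass conservation \eqref{eq:intuconst}.

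First, I would dispose of the trivial case $a=0$, where one can take $\gamma=0$. For $a>0$ the key tool is the Fenchel--Young inequality associated with the convex function $x\mapsto x\ln x-x$ (whose convex conjugate is $y\mapsto e^y$), namely
\[
 xy\leq x\ln x-x+e^y\qquad\text{for all }x\geq 0,\;y\in\R.
\]
Applied pointwise with $x=\tfrac{u(\cdot,t)}{2}$ and $y=2a\ln v(\cdot,t)$---which makes sense thanks to the strict positivity of $v$ provided by Lemma \ref{lem:vunifbdbelow}---this yields
\[
 au\ln v\;\leq\;\tfrac12 u\ln u-\tfrac{\ln 2}{2}\,u-\tfrac12 u+v^{2a}.
\]

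Integrating over $\Om$ and invoking \eqref{eq:intuconst} (to replace $\io u$ by the constant $m$) and Lemma \ref{lem:intvp} with $p=2a$ (to bound $\io v^{2a}$ uniformly by $C_{2a}$) then produces a time-independent $\gamma\in\R$ such that
\[
 a\io u\ln v\;\leq\;\tfrac12\io u\ln u+\gamma\qquad\text{on }(0,\Tmax).
\]
Rearranging and reinstating the nonnegative contribution $b\io|\na\sqrt v|^2$ yields the claim.

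No genuine obstacle is expected. The only delicate point is bookkeeping: the constant $\gamma$ must be independent of $t\in(0,\Tmax)$, which is automatic since both \eqref{eq:intuconst} and Lemma \ref{lem:intvp} provide bounds that are uniform on $(0,\Tmax)$. One might alternatively try Jensen's inequality applied to $\ln v$ in combination with the bound on $\io v$, but the Fenchel--Young route is cleaner and directly exposes the factor $\tfrac12$ needed in the statement.
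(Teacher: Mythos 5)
Your argument is correct and reaches the same estimate as the paper, but by a genuinely different mechanism. The paper splits $\io u\ln u$ in half, rewrites the remainder as $m\io\bigl(-\ln\frac{v^a}{u^{1/2}}\bigr)\frac um$ and applies Jensen's inequality with respect to the probability measure $\frac um\,d\lambda$, followed by H\"older's inequality to reduce to $\io v^{2a}$, which Lemma \ref{lem:intvp} bounds; you instead invoke pointwise convex duality for $x\mapsto x\ln x - x$ (Fenchel--Young, $xy\le x\ln x - x + e^y$) with $x=\tfrac u2$, $y=2a\ln v$, integrate, and again land on $\io v^{2a}\le C_{2a}$ plus mass conservation. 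Both routes consume exactly the same ingredients (Lemma \ref{lem:intvp} with $p=2a$ and \eqref{eq:intuconst}); yours avoids the normalization by $m$ and the H\"older step and is arguably more transparent about where the factor $\tfrac12$ comes from, while the paper's Jensen argument follows the established line of \cite{NSY1997,Biler99}. Two small remarks: for the case $a=0$ the constant $\gamma=0$ does not quite work, since you need $\io u\ln u\ge\tfrac12\io u\ln u$, which may fail when $\io u\ln u<0$; but the pointwise bound $s\ln s\ge -e^{-1}$ gives $\io u\ln u\ge -\tfrac{|\Om|}{e}$, so $\gamma=\tfrac{|\Om|}{2e}$ settles it (alternatively, just run your $a>0$ computation, which degenerates harmlessly). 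Also, for the substitution $y=2a\ln v$ you only need $v>0$ on $\Ombar\times(0,\Tmax)$, which is already part of the solution concept here; the quantitative lower bound of Lemma \ref{lem:vunifbdbelow} is not needed at this point.
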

\begin{proof}
 Denoting $m=\io u(t)$ as in \eqref{eq:intuconst}, we have 
\[
 \Fab(u,v) \geq \frac12\io u\ln u +\io u\ln\frac{u^{\frac12}}{v^a} = \frac12\io u\ln u+ m\io \left(-\ln\frac{v^a}{u^{\frac12}}\right) \frac u m,
\]
similar as in the proof of \cite[Thm. 3]{Biler99}. Hence, following an idea from the proof of \cite[Lemma 3.3]{NSY1997} in applying Jensen's inequality with the probability measure $\frac u m d\lambda$ and the convex function $-\ln$, we obtain 
\begin{align*}
 \Fab(u,v)\geq& \frac12\io u\ln u - m\ln \io \frac{v^au^{\frac12}}m \\
\geq& \frac12\io u\ln u - m\ln \left(\frac1m \left(\io v^{2a} \io u\right)^{\frac12}\right)\\
\geq& \frac12\io u\ln u +\frac m2\ln m - \frac m2\ln C_{2a}
\end{align*}
after applying H\"older's inequality and with $C_{2a}$ as in \ref{lem:intvp}.
\end{proof}

\begin{lemma}\label{lem:bdbelow}
Let $a,b\geq 0$. For any solution $(u,v)$ of \eqref{eq:CT},\\
i)  $\Fab(u,v)$ is bounded below.\\
ii) If $\sup_{t\in[0,\Tmax)} \Fab(u(t),v(t))<\infty$ then $\sup_{t\in[0,\Tmax)} \io u(t)\ln u(t)<\infty$.
\end{lemma}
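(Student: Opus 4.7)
The plan is to derive both parts as essentially immediate corollaries of Lemma \ref{lem:Fabgeq}, which already provides the pointwise bound
\[
 \Fab(u(t),v(t)) \geq \tfrac12 \io u(t)\ln u(t) - \gamma \qquad \text{on } (0,\Tmax)
\]
for some $\gamma\in\R$ depending on $a$, $m$ and $C_{2a}$ from Lemma \ref{lem:intvp}.

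For part (i), I would recall that the real-valued map $s\mapsto s\ln s$ attains its minimum $-\tfrac1e$ at $s=\tfrac1e$, so that $u\ln u \geq -\tfrac1e$ pointwise in $\Om\times(0,\Tmax)$. Integrating over $\Om$ gives $\io u(t)\ln u(t) \geq -\tfrac{|\Om|}{e}$, and combining with the estimate above yields the uniform lower bound $\Fab(u(t),v(t)) \geq -\tfrac{|\Om|}{2e} - \gamma$.

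For part (ii), if $K:=\sup_{t\in[0,\Tmax)} \Fab(u(t),v(t))<\infty$, then rearranging Lemma \ref{lem:Fabgeq} gives $\io u(t)\ln u(t) \leq 2(K+\gamma)$ for every $t\in[0,\Tmax)$, which is the desired statement.

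There is no genuine obstacle here: the real work was already done in Lemma \ref{lem:Fabgeq} via Jensen's and Hölder's inequalities together with the time-uniform $L^{2a}$ bound on $v$ from Lemma \ref{lem:intvp}. The only thing to be careful about is that the constant $\gamma$ from Lemma \ref{lem:Fabgeq} depends on $a$ (through $C_{2a}$) and on $m$, but not on $t$, so the bounds in (i) and (ii) are genuinely uniform on $[0,\Tmax)$.
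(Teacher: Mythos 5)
Your proof is correct and follows the same route as the paper, which simply declares both parts immediate consequences of Lemma \ref{lem:Fabgeq}; your use of the pointwise bound $s\ln s\geq -\tfrac1e$ to get the uniform lower bound $\io u\ln u\geq-\tfrac{|\Om|}{e}$ for part (i), and the rearrangement for part (ii), are exactly the standard details being left implicit there.
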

\begin{proof}
 Both statements are immediate consequences of Lemma \ref{lem:Fabgeq}.
\end{proof}

Lemma \ref{lem:intvp} as well enables us to control the first two summands of $\Fab(u,v)$ from above by $\io \frac{u^2}v$.
\begin{lemma}\label{lem43}
 Let $(u,v)$ be a solution to \eqref{eq:CT} and let $a>0$. Then for any $\delta>0$ there is $c_\delta>0$ such that 
\[
 \io u\ln u - a\io u\ln v \leq \delta\io \frac{u^2}v+c_\delta \qquad \mbox{ on }[0,\Tmax).
\]
\end{lemma}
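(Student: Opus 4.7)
The strategy is to combine the elementary inequality $\ln z \le z^\alpha/\alpha$ (valid for every $z,\alpha > 0$) with Young's inequality to produce a pointwise estimate, and then to absorb the leftover power of $v$ by means of Lemma \ref{lem:intvp}.

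Concretely, I would first rewrite $u\ln u - au\ln v = u\ln(u/v^a)$. Multiplying the inequality $\ln z \le z^\alpha/\alpha$ with $z=u/v^a$ by $u\ge 0$ (and noting that if $u\ln(u/v^a)$ is negative the bound is trivial) yields the pointwise estimate
\[
u\ln u - au\ln v \;\le\; \frac{1}{\alpha}\,\frac{u^{1+\alpha}}{v^{a\alpha}}
\]
for any $\alpha\in(0,1)$. The right-hand side still couples $u$ and $v$, so I would split it as
\[
\frac{u^{1+\alpha}}{v^{a\alpha}} \;=\; \left(\frac{u^2}{v}\right)^{\!(1+\alpha)/2}\! v^{(1+\alpha)/2 - a\alpha}
\]
and apply Young's inequality with conjugate exponents $p=2/(1+\alpha)$, $p'=2/(1-\alpha)$, which for any $\eps>0$ gives
\[
\frac{u^{1+\alpha}}{v^{a\alpha}} \;\le\; \eps\,\frac{u^2}{v} \;+\; C(\eps,\alpha)\, v^{\frac{1+\alpha(1-2a)}{1-\alpha}}.
\]

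To finish, I would fix $\alpha>0$ small enough that the exponent $(1+\alpha(1-2a))/(1-\alpha)$ is positive (automatic for $\alpha<1$ when $a\le 1/2$; when $a>1/2$, any $\alpha<1/(2a-1)$ works). For such $\alpha$, Lemma \ref{lem:intvp} supplies a time-independent constant $K$ bounding $\io v^{(1+\alpha(1-2a))/(1-\alpha)}$ on $[0,\Tmax)$. Integrating the pointwise inequality and then setting $\eps:=\alpha\delta$ produces
\[
\io u\ln u - a\io u\ln v \;\le\; \delta\io\frac{u^2}{v} + \frac{C(\alpha\delta,\alpha)\,K}{\alpha},
\]
which is the claim with $c_\delta := C(\alpha\delta,\alpha)K/\alpha$.

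I do not expect a real obstacle: the whole argument is an algebraic manipulation backed by Lemma \ref{lem:intvp}. The only subtlety is to keep the exponent of $v$ positive, but since $\alpha$ may be fixed independently of $\delta$ (in terms of $a$ alone), this causes no complication, and $\delta$ enters $c_\delta$ only through the Young constant $C(\alpha\delta,\alpha)$.
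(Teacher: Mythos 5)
Your proof is correct and follows essentially the same route as the paper: rewrite the integrand as $u\ln\frac{u}{v^a}$, bound $\ln z$ by a small power $z^\alpha$ (the paper uses $\ln x\le C_\eps x^\eps$, you use $\ln z\le z^\alpha/\alpha$), apply Young's inequality with exponents $\frac{2}{1+\alpha}$, $\frac{2}{1-\alpha}$ to split off $\frac{u^2}{v}$, and control the remaining power of $v$ — positive exactly under your smallness condition on $\alpha$, which matches the paper's requirement $\frac{1+\eps-2a\eps}{1-\eps}>0$ — uniformly in time via Lemma \ref{lem:intvp}. No gaps; the only difference is explicit bookkeeping of the constants.
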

\begin{proof}
 Given $a>0$ let $\eps\in(0,1)$ be so small that $\frac{1+\eps-2a\eps}{1-\eps}>0$. 
 There is $C_\eps>0$ such that for any $x>0$ we have $\ln x\leq C_\eps x^\eps$.
 Therefore for any $\delta>0$ Young's inequality and Lemma \ref{lem:intvp} provide $C_\delta>0$ and $c_\delta>0$ satisfying 
 \begin{align*}
  \io u\ln u - a\io u\ln v  =& \io u\ln \frac u {v^a} \leq C_\eps \io \frac{u^{1+\eps}}{v^{a\eps}}  \leq  \delta \io (u^{1+\eps}v^{-\frac{1+\eps}2})^{\frac2{1+\eps}} + C_\delta \io  (v^{\frac{1+\eps-2 a\eps}2})^\frac{2}{1-\eps}\\
\leq&\delta \io \frac{u^2}v + c_\delta.\qedhere
 \end{align*}
\end{proof}

With these preparations, we turn to the time derivative of $\Fab(u,v)$, beginning with the already investigated first part:

\begin{lemma}\label{lem:ddtFa0}
For any $a\geq 0$ and any solution $(u,v)$ of \eqref{eq:CT}, 
\begin{align*}
 \ddt \calF_{a,0}(u,v)(t) =&  -\io \frac{|\na u|^2}u + (\chi+2a )\io\frac{\na u \cdot \na v} v -a(\chi+1)\io \frac{u |\na v|^2}{v^2}+ a \io u - a\io \frac{u^2}v
\end{align*}
holds on $(0,\Tmax)$.
\end{lemma}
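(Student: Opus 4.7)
The plan is a direct differentiation of $\calF_{a,0}(u,v)=\io u\ln u -a\io u\ln v$, substitution of the evolution equations, and integration by parts. Since $(u,v)$ is a classical solution with $u\geq 0$, $v\geq \eta >0$ uniformly on $\Ombar\times[0,t]$ for any $t<\Tmax$ (by Lemma \ref{lem:vunifbdbelow} applied as in Theorem \ref{thm:exSingular}), all occurring integrals and boundary integrals are well defined, and the Neumann condition $\delny u\bdry=\delny v\bdry=0$ makes all boundary terms arising from integration by parts vanish.

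First I would compute $\ddt\io u\ln u = \io(1+\ln u)u_t$, insert the $u$-equation, and integrate by parts twice to arrive at
\[
\ddt \io u\ln u = -\io \frac{|\na u|^2}{u} + \chi \io \frac{\na u\cdot \na v}{v},
\]
which is the standard identity for the $u\ln u$-functional in the Keller--Segel setting. Next I would split $\ddt\bigl(-a\io u\ln v\bigr) = -a\io u_t\ln v - a\io \frac{u\,v_t}{v}$ and treat the two summands separately. For the first, inserting $u_t$, integrating by parts (moving $\na$ off $\ln v$), and using $\na\ln v = \na v/v$ yields
\[
-a\io u_t\ln v = a\io \frac{\na u\cdot\na v}{v} - a\chi \io \frac{u\,|\na v|^2}{v^2}.
\]
For the second, inserting $v_t=\Lap v - v + u$ and integrating the $\Lap v$-term by parts against $u/v$ (whose gradient is $\na u/v - u\na v/v^2$) gives
\[
-a\io \frac{u\,v_t}{v} = a\io \frac{\na u\cdot\na v}{v} - a\io \frac{u\,|\na v|^2}{v^2} + a\io u - a\io \frac{u^2}{v}.
\]

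Finally I would add the three contributions. The gradient cross-terms combine with coefficient $\chi + a + a = \chi + 2a$, while the $|\na v|^2$-terms combine with coefficient $-a\chi - a = -a(\chi+1)$, reproducing exactly the claimed identity
\[
\ddt\calF_{a,0}(u,v) = -\io \frac{|\na u|^2}{u} + (\chi+2a)\io \frac{\na u\cdot\na v}{v} - a(\chi+1)\io \frac{u\,|\na v|^2}{v^2} + a\io u - a\io \frac{u^2}{v}.
\]

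There is no real obstacle here beyond bookkeeping of coefficients; the only point requiring a word is the justification for differentiating under the integral sign and discarding boundary terms, both of which follow from the regularity $u,v\in C^{2,1}(\Ombar\times(0,\Tmax))$ together with the strict positivity of $v$ on $\Ombar\times[0,t]$ for every $t<\Tmax$, which keeps $\ln v$, $1/v$, and $1/v^2$ uniformly bounded on the time slices considered.
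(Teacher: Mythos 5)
Your computation is correct and coincides with the paper's own proof: both differentiate $\calF_{a,0}$ directly, insert the two equations of \eqref{eq:CT}, and integrate by parts using the Neumann conditions, with the cross-terms combining to $\chi+2a$ and the $|\na v|^2$-terms to $-a(\chi+1)$ exactly as you state. The only difference is presentational (you split the derivative into three pieces, the paper does it in one chain), so nothing further is needed.
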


\begin{proof}
Using the first equation of \eqref{eq:CT} in $\ddt \left(\io u\ln u - a\io u\ln v\right)$ and integrating by parts we obtain:
\begin{align*}
 \ddt \left(\io u\ln u - a\io u\ln v\right) = & \io u_t \ln u + \io u_t - a\io u_t \ln v - a \io \frac u v v_t\\
 =& - \io \frac{\na u}u \left(\na u -\chi \frac u v \na v\right) + a \io \frac {\na v}v \left(\na u - \chi \frac u v \na v\right) - a \io \frac u v \left(\Lap v- v +u\right)\\
=& -\io \frac{|\na u|^2}u + \chi \io\frac{\na u \cdot \na v} v + a \io \frac{\na u\cdot \na v}v - a\chi \io \frac{u |\na v|^2}{v^2}\\& + a \io \frac {\na u\cdot\na v}v- a\io \frac{u|\na v|^2}{v^2} + a\io u -a\io \frac{u^2}v.\qedhere
\end{align*}
\end{proof}
Since we do not know the sign of $\io \frac{\na u\cdot \na v}v$ and, in this situation, cannot control $\io \frac{u|\na v|^2}{v^2}$, we are left with  Young's inequality, hoping that the resulting coefficient $\frac{(\chi+2a)^2}4-a(\chi+1)$ of $\io \frac{u|\na v|^2}{v^2}$ turns out to be negative. 
This can be achieved if $\chi<1$.

However, it becomes possible to cope with larger parameters if $\io \frac{u|\na v|^2}{v^2}$ can be controlled, e.g. by having control over $\io \frac{|\na v|^4}{v^3}$ and $\io \frac{u^2}v$. 
The second term already being in place, fortunately, the first is one of the terms arising from the following:

\begin{lemma}\label{lem:ddtnasqrtv}
Let $\Om$ be convex. For any solution $(u,v)$ of \eqref{eq:CT},
\begin{align}
 4 \ddt \left(\io |\na \sqrt v|^2\right)\leq& -2 c_0 \io \frac{|\na v|^4}{v^3} - \io \frac{|\na v|^2} v +2\io \frac{\na u\cdot \na v}v- \io \frac{|\na v|^2 u}{v^2}
\end{align}
holds on $(0,\Tmax)$, where $c_0$ is the constant provided by Lemma \ref{lem:estvDDlnv}.
\end{lemma}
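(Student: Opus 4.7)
The plan is to recognize that $4|\na\sqrt v|^2 = \frac{|\na v|^2}{v}$, so it suffices to compute $\ddt\io \frac{|\na v|^2}{v}$ and estimate it from above by the claimed right-hand side. Differentiating under the integral and using $v_t=\Lap v-v+u$ directly gives
\[
 \ddt\io\frac{|\na v|^2}{v} = \io\left(\frac{2\na v\cdot\na v_t}{v}-\frac{|\na v|^2 v_t}{v^2}\right).
\]
I would then integrate by parts in the first summand, using $\delny v\bdry=0$, writing
\[
 \io\frac{2\na v\cdot\na v_t}{v} = -2\io v_t\,\na\cdot\Big(\frac{\na v}{v}\Big) = -\io\frac{2 v_t\Lap v}{v}+\io\frac{2 v_t|\na v|^2}{v^2},
\]
so after combining and substituting $v_t=\Lap v-v+u$ the expression collapses to
\[
 \ddt\io\frac{|\na v|^2}{v} = -2\io\frac{|\Lap v|^2}{v}-2\io\frac{u\Lap v}{v}+\io\frac{|\na v|^2\Lap v}{v^2}-\io\frac{|\na v|^2}{v}+\io\frac{u|\na v|^2}{v^2},
\]
where I also used $\io\Lap v=\intdom \delny v=0$. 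A second integration by parts on $-2\io\frac{u\Lap v}{v}$ turns it into $2\io\frac{\na u\cdot\na v}{v}-2\io\frac{u|\na v|^2}{v^2}$, producing the desired $2\io\frac{\na u\cdot\na v}{v}$ and leaving the net coefficient $-1$ in front of $\io\frac{u|\na v|^2}{v^2}$.

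The key step, and the only one exploiting convexity of $\Om$, is now to handle the two remaining unwanted summands $-2\io\frac{|\Lap v|^2}{v}+\io\frac{|\na v|^2\Lap v}{v^2}$. I would invoke Lemma \ref{lem:iofraclapvv}(ii) with $w=v$ (this is exactly where the boundary term $\intdom\frac{1}{v}\delny|\na v|^2\leq 0$ is discarded, using $\delny v\bdry=0$ together with convexity of $\Om$): it yields
\[
 -2\io\frac{|\Lap v|^2}{v}\leq -2\io v\,|D^2\ln v|^2-\io\frac{|\na v|^2\Lap v}{v^2},
\]
so the $\io\frac{|\na v|^2\Lap v}{v^2}$ terms—whose sign we cannot control—cancel exactly.

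Finally, I would apply Lemma \ref{lem:estvDDlnv} to absorb $-2\io v|D^2\ln v|^2\leq -2c_0\io\frac{|\na v|^4}{v^3}$, which produces the leading dissipative term in the claim. The main obstacle I anticipated was controlling the mixed term $\io\frac{|\na v|^2\Lap v}{v^2}$ of indefinite sign, but this is precisely the purpose of the preparatory identities in Section \ref{sec:estimates}: the cancellation provided by Lemma \ref{lem:iofraclapvv}(ii) makes the whole computation go through without further effort, and assembling the surviving terms gives the asserted inequality for $4\ddt\io|\na\sqrt v|^2$.
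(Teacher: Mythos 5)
Your proposal is correct and takes essentially the same route as the paper: both arrive at the identity $4\ddt\io|\na\sqrt v|^2=-2\io\frac{|\Lap v|^2}{v}+\io\frac{|\na v|^2\Lap v}{v^2}-\io\frac{|\na v|^2}{v}+2\io\frac{\na u\cdot\na v}{v}-\io\frac{u|\na v|^2}{v^2}$ and then invoke Lemma \ref{lem:iofraclapvv}(ii) (convexity) to cancel the indefinite mixed term and Lemma \ref{lem:estvDDlnv} for the dissipative term. The only cosmetic difference is that you integrate by parts in $\io\frac{\na v\cdot\na v_t}{v}$ before substituting $v_t$, which costs an extra integration by parts on the $u\Lap v$ term and the observation $\io\Lap v=0$, whereas the paper substitutes first and integrates by parts only in the $\na\Lap v$ term.
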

\begin{proof}
From the second equation of \eqref{eq:CT}, we obtain 
\begin{align*}
 4 \ddt \left(\io |\na \sqrt v|^2\right) =& \ddt \io \frac{|\na v|^2}v =\io \frac{2\na v\cdot \na v_t}v - \io \frac{|\na v|^2v_t} {v^2}\\
 =& \io \frac{2\na v\cdot \na \Lap v}v - \io \frac{2|\na v|^2} v +\io \frac{2\na v\cdot\na u}v - \io \frac{|\na v|^2 \Lap v}{v^2} +\io \frac{|\na v|^2 v}{v^2} - \io \frac{|\na v|^2 u}{v^2}.
\end{align*}
Integration by parts in the first integral and merging the second and second to last summand lead us to
\begin{align}\label{eq:ddtionasqrtv}
 4 \ddt \left(\io |\na \sqrt v|^2\right) 
= -2\io \frac{|\Lap v|^2}v + \io \frac{|\na v|^2\Lap v}{v^2} - \io \frac{|\na v|^2} v +2\io \frac{\na u\cdot \na v}v- \io \frac{|\na v|^2 u}{v^2}.
\end{align}
By Lemma \ref{lem:iofraclapvv} and due to the convexity of $\Om$ we can transform the first summand according to 
\[
 -2\io \frac{|\Lap v|^2}v \leq - 2\io v|D^2 \ln v|^2- \io \frac1{v^2}|\na v|^2 \Lap v,
\]
making the second term in the right hand side of \eqref{eq:ddtionasqrtv} vanish:
\begin{align}
 4 \ddt \left(\io |\na \sqrt v|^2\right) \leq -2\io v|D^2\ln v|^2  +2\io \frac{\na u\cdot \na v}v- \io \frac{|\na v|^2 u}{v^2}.
\end{align}
We are left with a term we can estimate with the help of Lemma \ref{lem:estvDDlnv}:
\[
 - 2\io v|D^2 \ln v|^2 \leq -2 c_0 \io \frac{|\na v|^4}{v^3},
\]
thereby gaining the term which will make the crucial difference in the estimates to come and arriving at 
\begin{align*}
 4 \ddt \left(\io |\na \sqrt v|^2\right)\leq& -2 c_0 \io \frac{|\na v|^4}{v^3} - \io \frac{|\na v|^2} v +2\io \frac{\na u\cdot \na v}v- \io \frac{|\na v|^2 u}{v^2}.\qedhere
\end{align*}
\end{proof}

If we combine the previous two lemmata, we are led to:
\begin{lemma}\label{lem42}
Let $\Om\sub\R^2$ be a convex, bounded, smooth domain and let $a,b\geq0$, $\delta\in(0,1)$. Then for any solution $(u,v)$ of \eqref{eq:CT},
\begin{align}\label{eq:lem47}\nn
 \ddt \Fab(u,v)(t) \leq&  \left( \frac1{4a(1-\delta)}\left(\frac{(\chi+2a+\frac b2)^2}{4 (1-\delta)}-a\chi-a -\frac b4\right)_+^2- \frac{b c_0}2 \right) \io \frac{|\na v|^4}{v^3} \\&-\delta \io \frac{|\na u|^2} u -\delta \io \frac{u^2}v + a \io u - \frac b4 \io \frac{|\na v|^2} v \qquad \mbox{ on }(0,\Tmax).
\end{align}
\end{lemma}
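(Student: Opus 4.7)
\emph{Proof strategy.} The inequality will follow by pure algebra: adding the two preceding lemmas and then absorbing the resulting indefinite cross terms by two applications of Young's inequality.

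Since $\Fab=\calF_{a,0}+b\io|\na\sqrt v|^2$, I first sum Lemma~\ref{lem:ddtFa0} with $b/4$ times the inequality of Lemma~\ref{lem:ddtnasqrtv}. Collecting like integrals, the coefficient of the sign-indefinite cross term $\io\frac{\na u\cdot\na v}{v}$ becomes $\chi+2a+\tfrac{b}{2}$, while $\io\frac{u|\na v|^2}{v^2}$ picks up the negative coefficient $-(a\chi+a+\tfrac{b}{4})$; the beneficial contributions $-\io\frac{|\na u|^2}{u}$, $-a\io\frac{u^2}{v}$, $-\frac{bc_0}{2}\io\frac{|\na v|^4}{v^3}$, $-\tfrac{b}{4}\io\frac{|\na v|^2}{v}$, together with the bounded term $a\io u$, are carried over unchanged.

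The first Young step then disposes of the cross integral via the factorisation $\frac{\na u\cdot\na v}{v}=\frac{\na u}{\sqrt u}\cdot\frac{\sqrt u\,\na v}{v}$, with the parameter chosen so as to use up a fraction $(1-\delta)$ of $-\io\frac{|\na u|^2}{u}$; this produces the residual $-\delta\io\frac{|\na u|^2}{u}$ and shifts the coefficient of $\io\frac{u|\na v|^2}{v^2}$ to
\[
\mu \;=\; \frac{\bigl(\chi+2a+\tfrac{b}{2}\bigr)^2}{4(1-\delta)}-a\chi-a-\tfrac{b}{4}.
\]
If $\mu\leq 0$, the term $\mu\io\frac{u|\na v|^2}{v^2}$ is already non-positive and is simply dropped, which is exactly the origin of the $(\,\cdot\,)_+^2$ in the statement. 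If $\mu>0$, a second Young step with the splitting $\frac{u|\na v|^2}{v^2}=\frac{u}{\sqrt v}\cdot\frac{|\na v|^2}{v^{3/2}}$ and parameter $a(1-\delta)$ sends part of the integral into the $\io\frac{u^2}{v}$ reservoir and deposits $\frac{\mu^2}{4a(1-\delta)}\io\frac{|\na v|^4}{v^3}$, which merges with $-\frac{bc_0}{2}\io\frac{|\na v|^4}{v^3}$ to yield the claimed coefficient.

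No deep estimate is required; the whole argument is a careful bookkeeping exercise. The only delicate point is to match the two Young parameters (both tied to $1-\delta$) so that the two good quadratic terms $\io\frac{|\na u|^2}{u}$ and $\io\frac{u^2}{v}$ both retain strictly negative coefficients on the right-hand side, as these are precisely what the energy arguments in Section~\ref{sec:proofmainthm} will exploit alongside the sign of the coefficient in front of $\io\frac{|\na v|^4}{v^3}$.
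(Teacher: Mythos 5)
Your proposal is correct and follows essentially the same route as the paper: add Lemma \ref{lem:ddtFa0} to $\tfrac b4$ times Lemma \ref{lem:ddtnasqrtv}, absorb the cross term $\io\frac{\na u\cdot\na v}{v}$ by Young's inequality while retaining $-\delta\io\frac{|\na u|^2}{u}$, and then, when the resulting coefficient of $\io\frac{u|\na v|^2}{v^2}$ is positive, apply Young again with parameter $a(1-\delta)$ to trade it against $\io\frac{u^2}{v}$ and $\io\frac{|\na v|^4}{v^3}$ (dropping it otherwise, which is exactly the origin of the $(\cdot)_+^2$). The only cosmetic point, shared with the paper's own computation, is that this bookkeeping actually leaves $-a\delta\io\frac{u^2}{v}$ rather than $-\delta\io\frac{u^2}{v}$; this is harmless for the later arguments, which only use strict negativity of that coefficient.
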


\begin{proof}
An estimate for $\ddt \Fab(u(t),v(t))$ is given by the sum of the terms from Lemma \ref{lem:ddtFa0} and Lemma \ref{lem:ddtnasqrtv}:
\begin{align*}
  \ddt \Fab(u,v)(t) \leq &  -\io \frac{|\na u|^2}u + \left(\chi+2a +\frac b 2\right)\io\frac{\na u \cdot \na v} v - \left(a\chi+a+\frac b4\right)\io \frac{u |\na v|^2}{v^2}\\&+ a \io u - a\io \frac{u^2}v
 -\frac b 2 c_0 \io \frac{|\na v|^4}{v^3} - \frac b4 \io \frac{|\na v|^2} v.
\end{align*}
In order to finally still have some control over $\int \frac{|\na u|^2}u$, as required for Lemma \ref{lem:intandintgivesGE}, we retain a small portion of this term when applying Young's inequality: 
\[
  -\io \frac{|\na u|^2}u + \left(\chi+2a +\frac b 2\right) \io\frac{\na u \cdot \na v} v \leq \big(-1 + (1 -\delta)\big) \io \frac{|\na u|^2}u + \frac{(\chi+2a+\frac b2)^2}{4 (1-\delta)} \io \frac{u|\na v|^2}{v^2},
\]
so that 
\begin{align*}
 \ddt \Fab(u,v)(t) 
\leq & -\delta \io \frac{|\na u|^2} u + \left(\frac{(\chi+2a+\frac b2)^2}{4 (1-\delta)}-a\chi-a-\frac b4\right) \io \frac{u |\na v|^2}{v^2}\\&+ a\io u- a\io \frac{u^2}v
 -\frac b 2 c_0 \io \frac{|\na v|^4}{v^3} - \frac b4 \io \frac{|\na v|^2} v.
\end{align*}
By virtue of the presence of $-\io \frac{|\na v|^4}{v^3}$, which originates from the additional summand of the energy functional and the preparations of Section \ref{sec:estimates},  
we can continue estimating $\io \frac{u |\na v|^2}{v^2}$ by $\io \frac{u^2}v$ and $\io \frac{|\na v|^4}{v^3}$ 
and still hope for negative coefficients in front of the integrals, in contrast to the situation of Lemma \ref{lem:ddtFa0}.
In doing so we keep some part of $\io \frac{u^2}v$ for the sake of a later application of Lemma \ref{lem43} and arrive at 
\begin{align*}
 \ddt \Fab(u,v)(t) 
\leq & -\delta \io \frac{|\na u|^2} u + a (1-\delta) \io \frac{u^2}v + \frac1{4a(1-\delta)}\left(\frac{(\chi+2a+\frac b2)^2}{4 (1-\delta)}-a\chi-a-\frac b4\right)_+^2 \io \frac{|\na v|^4}{v^3}\\ &+ a \io u - a\io \frac{u^2}v
 -\frac b 2 c_0 \io \frac{|\na v|^4}{v^3} - \frac b4 \io \frac{|\na v|^2} v,
\end{align*}
which amounts to \eqref{eq:lem47}.
\end{proof}

\begin{lemma}\label{lem44}
Let $a>0$, $b\geq 0$, $\chi>0$ be such that  
\begin{equation}\label{eq:defphii}
 \phii(a,b;\chi) := \left( \frac1{4a}\left(\frac{(\chi+2a+\frac b2)^2}{4}-a\chi-a -\frac b4\right)_+^2 - \frac{b c_0}2 \right)<0,
\end{equation}
and let $(u,v)$ be a solution of \eqref{eq:CT}.
Then there are $\kappa, \delta >0$ and $c>0$ such that for any $t\in(0,\Tmax)$, 
\[
 \ddt \Fab(u,v)(t) + \kappa \Fab(u,v)(t) +\delta \io \frac{|\na u(t)|^2}{u(t)} \leq c.
\]
\end{lemma}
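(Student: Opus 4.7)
The plan is to start from the inequality of Lemma~\ref{lem42}, invoke the hypothesis $\phii(a,b;\chi)<0$ to neutralize the $\io|\na v|^4/v^3$ contribution, and then balance $\kappa\Fab(u,v)$ against the remaining terms by combining mass conservation with Lemma~\ref{lem43}. Each step is governed by a specific choice of parameter; the strict inequality in the hypothesis is what provides the slack needed to accommodate all of them simultaneously.

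The first step is to fix $\delta\in(0,1)$ so small that the coefficient of $\io|\na v|^4/v^3$ in \eqref{eq:lem47} is nonpositive. Setting $\delta=0$ in that coefficient reproduces exactly $\phii(a,b;\chi)$ from \eqref{eq:defphii}, which is strictly negative by assumption, and the coefficient depends continuously on $\delta$; hence such a $\delta$ exists. With this choice, discarding the nonpositive $|\na v|^4/v^3$ term and using the mass identity $\io u=m$ from \eqref{eq:intuconst}, Lemma~\ref{lem42} becomes
\[
 \ddt \Fab(u,v) + \delta\io\frac{|\na u|^2}u + \delta\io\frac{u^2}v + \frac b4 \io \frac{|\na v|^2}v \leq am \qquad\text{on }(0,\Tmax).
\]

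Next I pick any $\kappa\in(0,1)$ and add $\kappa\Fab(u,v)=\kappa\io u\ln u - \kappa a \io u\ln v + \frac{b\kappa}4\io \frac{|\na v|^2}v$ to both sides. The $\frac{b\kappa}{4}\io |\na v|^2/v$ merges with $-\frac b4 \io |\na v|^2/v$ into $\frac{b(\kappa-1)}{4}\io |\na v|^2/v \leq 0$. The remaining novelty $\kappa(\io u\ln u - a\io u\ln v)$ I bound via Lemma~\ref{lem43} applied with parameter $\frac\delta{2\kappa}$, obtaining $\frac\delta2\io u^2/v+\kappa c_{\delta/(2\kappa)}$; half of the existing $-\delta\io u^2/v$ then absorbs the first summand, and the remaining $-\frac\delta2\io u^2/v$ is discarded.

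I do not expect a genuine obstacle: once $\delta$ has been fixed using the strict inequality $\phii(a,b;\chi)<0$, the rest is bookkeeping, and the resulting constant is of the form $c=am+\kappa c_{\delta/(2\kappa)}$. The only point of caution is the order of the choices, namely that $\delta$ must be fixed first (depending on $a,b,\chi$) so that the subsequent selection of $\kappa\in(0,1)$ and the invocation of Lemma~\ref{lem43} can use $\delta$ as a fixed quantity. The retention of the $-\delta\io|\na u|^2/u$ term throughout is essential, since it is precisely this integral which will later feed into the application of Lemma~\ref{lem:intandintgivesGE}.
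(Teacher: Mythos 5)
Your argument is correct and follows the paper's own route: fix $\delta$ by continuity of the coefficient $\phii_\delta(a,b;\chi)$ at $\delta=0$ using the strict negativity of $\phii(a,b;\chi)$, apply Lemma~\ref{lem42}, and then absorb $\kappa\Fab$ via mass conservation \eqref{eq:intuconst} and Lemma~\ref{lem43}. You merely spell out the final bookkeeping (the choice $\kappa\in(0,1)$, the parameter $\delta/(2\kappa)$ in Lemma~\ref{lem43}, and the sign of the leftover $\io|\na v|^2/v$ term) that the paper leaves implicit, and these details are handled correctly.
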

\begin{proof}
By continuity of 
\[
 \delta\mapsto \phii_\delta(a,b;\chi):=\left( \frac1{4a(1-\delta)}\left(\frac{(\chi+2a+\frac b2)^2}{4 (1-\delta)}-a\chi-a -\frac b4\right)_+^2 - \frac{b c_0}2 \right)
\]
in $\delta=0$, for fixed $a,b,\chi$, negativity of $\phii(a,b;\chi)$ entails the existence of $\delta>0$ so that $\phii_\delta(a,b;\chi)$ is negative as well.
Therefore, by Lemma \ref{lem42}, 
\[
 \ddt \Fab(u,v) +\delta \io \frac{|\na u|^2}u + \delta \io \frac{u^2}v + b \io |\na\sqrt{v}|^2 \leq a\io u \qquad \mbox{on }(0,\Tmax).
\]
Since $\io u$ is constant in time by \eqref{eq:intuconst}, Lemma \ref{lem43} implies the assertion.
\end{proof}

\begin{lemma}\label{lem45}
 If 
\[
 \chi_0\in\bigset{\chi>0; \mbox{there are }a>0\mbox{ and }b\geq 0\mbox{ such that } \phii(a,b;\chi)<0} =: M,
\]
then 
\[
 (0,\chi_0)\subset M.
\]
\end{lemma}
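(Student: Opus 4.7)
The plan is to show that, for any fixed $a>0$ and $b\geq 0$, the map $\chi\mapsto\phii(a,b;\chi)$ is non-decreasing on $[0,\infty)$. Once this monotonicity is established, the conclusion is immediate: given $\chi_0\in M$, pick $a>0$ and $b\geq 0$ with $\phii(a,b;\chi_0)<0$; then for every $\chi\in(0,\chi_0)$ the same pair $(a,b)$ still satisfies $\phii(a,b;\chi)\leq\phii(a,b;\chi_0)<0$, so $\chi\in M$.

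To verify the monotonicity, set
\[
 T(\chi):=\frac{(\chi+2a+\tfrac{b}{2})^2}{4}-a\chi-a-\frac{b}{4}
\]
so that $\phii(a,b;\chi)=\frac{1}{4a}(T(\chi))_+^2-\frac{bc_0}{2}$. A direct differentiation gives
\[
 T'(\chi)=\frac{\chi+2a+\tfrac{b}{2}}{2}-a=\frac{\chi}{2}+\frac{b}{4}\geq 0
\]
for all $\chi\geq 0$, so $T$ is non-decreasing. Since the map $x\mapsto (x)_+^2$ is non-decreasing on $\R$, the composition $\chi\mapsto (T(\chi))_+^2$ inherits this monotonicity, and hence so does $\chi\mapsto\phii(a,b;\chi)$.

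There is no real obstacle here. The only thing worth noting is that the case $b=0$ is automatically excluded for any $\chi_0\in M$ (since $\phii(a,0;\chi)\geq 0$ by definition), but the monotonicity argument above does not use $b>0$ and is valid uniformly in $b\geq 0$, so the inclusion $(0,\chi_0)\subset M$ follows without any case distinction.
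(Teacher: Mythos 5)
Your proof is correct and takes essentially the same route as the paper: the paper's argument also rests on monotonicity of $\chi\mapsto\phii(a,b;\chi)$ for fixed $a>0$, $b\geq 0$ (which it merely asserts after expanding the square), while you verify it explicitly via $T'(\chi)=\tfrac{\chi}{2}+\tfrac{b}{4}\geq 0$ and the monotonicity of $x\mapsto (x)_+^2$.
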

\begin{proof}
 Since, for any fixed $a>0$, $b\geq 0$, 
\[
 \chi\mapsto \phii(a,b;\chi) = \frac1{64a}\left(\left(\chi^2+4a^2+\frac{b^2}4+b\chi+2ab-4a-b\right)_+^2-32abc_0\right)
\]
is monotone
, for any $a>0$, $b\geq 0$
\[
\phii(a,b;\chi_0)<0 \mbox{ implies } \phii(a,b;\chi)<0 \mbox{ for any } 0<\chi<\chi_0.\qedhere
\]
\end{proof}

\begin{lemma}\label{lem46}
There is $\chi_0>1$ such that $\phii(a,b;\chi_0)<0$ for some $a>0, b>0$.
\end{lemma}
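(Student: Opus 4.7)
The plan is to evaluate $\phii$ at the ``classical'' critical parameter triple $(a,b,\chi)=(\tfrac12,0,1)$---which corresponds precisely to the threshold situation produced by the functional \eqref{eq:fctshort} in \cite{NSY1997, Biler99}---and then perturb both in $b$ and in $\chi$. Writing
\[
 Q(a,b,\chi) := \frac{(\chi+2a+\tfrac b2)^2}{4}-a\chi-a-\tfrac b4,
\]
I would first observe the key algebraic fact that $Q(\tfrac12,0,1)=0$, so that $b=0$ is degenerate in exactly the right way.

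Next I would fix $a=\tfrac12$, $\chi=1$ and expand: a direct computation gives
\[
 Q(\tfrac12,b,1) \;=\; \frac{(2+\tfrac b2)^2}{4}-1-\tfrac b4 \;=\; \frac{b(b+4)}{16},
\]
which is $O(b)$ as $b\downto 0$. Substituting into \eqref{eq:defphii} I obtain
\[
 \phii(\tfrac12,b;1) \;=\; \frac12\left(\frac{b(b+4)}{16}\right)^{\!2} - \frac{bc_0}{2} \;=\; \frac{b^2(b+4)^2}{512} - \frac{bc_0}{2}.
\]
The right-hand side is negative precisely when $b(b+4)^2<256\,c_0$, which certainly holds for all sufficiently small $b>0$. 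This yields some $b_0>0$ with $\phii(\tfrac12,b_0;1)<0$.

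Finally, since for this fixed $(a,b)=(\tfrac12,b_0)$ the expression $Q(\tfrac12,b_0,\chi)$ is strictly positive at $\chi=1$ and depends continuously (in fact polynomially) on $\chi$, the map $\chi\mapsto \phii(\tfrac12,b_0;\chi)$ is continuous in a neighbourhood of $\chi=1$. By continuity there exists $\chi_0>1$ with $\phii(\tfrac12,b_0;\chi_0)<0$, proving the claim. The whole argument is a short computation plus a continuity step; the conceptual point---and the reason the paper improves over $\chi\le 1$---is that the linear-in-$b$ gain $-bc_0/2$ beats the quadratic-in-$b$ loss $\tfrac1{2}Q^2\sim b^2/32$ precisely because the classical critical choice sends $Q$ to $0$, so there is no real obstacle to overcome here.
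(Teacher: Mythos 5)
Your proof is correct and takes essentially the same route as the paper: both start from the degenerate point $(a,\chi)=(\tfrac12,1)$ where the bracket vanishes at $b=0$, show $\phii(\tfrac12,b;1)<0$ for small $b>0$ because the linear gain $-\tfrac{bc_0}{2}$ beats the quadratic term (the paper via the derivative $\frac{d}{db}\phii(\tfrac12,b,1)\big\rvert_{b=0}=-\tfrac{c_0}{2}$, you via the explicit inequality $b(b+4)^2<256\,c_0$), and then conclude by continuity in $\chi$.
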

\begin{proof}
 Since $\phii(\frac12,0,1)=0$ and 
\[
\frac{d}{db} \phii\left(\frac12,b,1\right)\Bigg\rvert_{b=0}= \frac{d}{db} \left(\frac1{32}\left(\frac{b^2}4+b\right)^2-\frac12 c_0b\right)\Bigg\rvert_{b=0} =  \left[\frac1{16}(\frac{b^2}4+b)(\frac b2 +1)-\frac12 c_0\right]\Bigg\rvert_{b=0}=-\frac{c_0}2<0, 
\]
 there is $b>0$ such that $\phii(\frac12,b,1)<0$ and by continuity of $\phii$ with respect to $\chi$, the assertion follows.
\end{proof}

\begin{proof}[Proof of Theorem \ref{thm:main}]
 By Lemma \ref{lem46}, there are $a,b>0$, $\chi_0>1$ such that $\phii(a,b,\chi_0)<0$ and hence, by Lemma \ref{lem45}, also $\phii(a,b,\chi)<0$ for $\chi\in(0,\chi_0)$. An application of Lemma \ref{lem44} thus reveals that for all $t>0$
\begin{equation}
\label{eq:ddtFab}
 \ddt \Fab(u,v)(t) + \kappa \Fab(u,v)(t) +\delta \io \frac{|\na u|^2}u \leq c
\end{equation}
 for some $\kappa,\delta,c>0$. Together with the boundedness of $\Fab(u,v)$ from below by Lemma \ref{lem:bdbelow} i) this ensures that $\Fab(u,v)$ is bounded so that an integration of \eqref{eq:ddtFab} also shows the boundedness of $\int_t^{t+1} \io \frac{|\na u|^2}u$.\\
Since $\Fab(u,v)$ is bounded, by Lemma \ref{lem:bdbelow} ii) the same holds true for $\io u\ln u$ and so the conditions of Lemma \ref{lem:intandintgivesGE} are met and Theorem \ref{thm:main} follows.
\end{proof}

\begin{remark}
 Assuming $c_0=\frac1{(2+\sqrt{2})^2}$, as permitted by Remark \ref{rem:c0}, 
 \[
  -1.1 \cdot 10^{-5}\approx \phii(0.49,0.001;1.015)<0,
 \]
 i.e. $\chi_0> 1.015$.
\end{remark}

%
%
%
%

%

\small

\end{document}